\newtheorem{theorem}{Theorem}[section]
\newtheorem{lemma}[theorem]{Lemma}
\theoremstyle{definition}
\theoremstyle{remark}
\newtheorem{remark}[theorem]{Remark}
\numberwithin{equation}{section}
\begin{document}

\title[Model error learning method for IMSP]
 {Recursive linearization method for inverse medium scattering problems with complex mixture Gaussian error learning}

\author[J.X.Jia]{Junxiong Jia}
\address{School of Mathematics and Statistics,
Xi'an Jiaotong University,
 Xi'an
710049, China}
\email{jjx323@xjtu.edu.cn}
\thanks{}

\author[B. Wu]{Bangyu Wu}
\address{School of Mathematics and Statistics,
Xi'an Jiaotong University,
Xi'an,
710049, China}
\email{bangyuwu@xjtu.edu.cn}

\author[J. Peng]{Jigen Peng}
\address{School of Mathematics and Statistics,
Xi'an Jiaotong University,
 Xi'an
710049, China}
\email{jgpeng@xjtu.edu.cn}

\author[J. Gao]{Jinghuai Gao}
\address{School of Electronic and Information Engineering,
Xi'an Jiaotong University,
 Xi'an
710049, China}
\email{jhgao@xjtu.edu.cn}

\subjclass[2010]{74G75, 74J20, 47A52}

\date{}

\keywords{Model error learning, Inverse medium scattering, Adjoint state approach, Complex Gaussian mixture distribution}

\begin{abstract}
This paper is concerned with the modeling errors appeared in the numerical methods of inverse medium scattering problems (IMSP).
Optimization based iterative methods are wildly employed to solve IMSP, which are computationally intensive due to a series of
Helmholtz equations need to be solved numerically. Hence, rough approximations of Helmholtz equations can significantly
speed up the iterative procedure. However, rough approximations will lead to instability and inaccurate estimations.
Using the Bayesian inverse methods, we incorporate the modelling errors brought by the rough approximations.
Modelling errors are assumed to be some complex Gaussian mixture (CGM) random variables, and in addition, well-posedness of IMSP in
the statistical sense has been established by extending the general theory to involve CGM noise. Then, we generalize the
real valued expectation-maximization (EM) algorithm used in the machine learning community
to our complex valued case to learn parameters in the CGM distribution.
Based on these preparations, we generalize the recursive linearization method (RLM) to
a new iterative method named as Gaussian mixture recursive linearization method (GMRLM) which takes modelling errors into account.
Finally, we provide two numerical examples to illustrate the effectiveness of the proposed method.
\end{abstract}

\maketitle

%%%%%%%%%%%%%%%%%%%%%%%%%%%%%%%%%%%%%%%%%%%%%%%%%%%%%%%%%%%%%%%%%%%%%%%%%%%%%%%%%%%%%%%%%%%%%%%%%%%%%%%%%%%%%%%%%%%%%%%%%%%%%%%%%%%%%%%%%%%%%

%%%% ------------------------------------------------------------------------------------------------------------------------------

\section{Introduction}

Scattering theory has played a central role in the field of mathematical physics, which is concerned with the effect
that an inhomogeneous medium has on an incident particle or wave \cite{ColtonThirdBook}.
Usually, the total field is viewed as the sum of an incident field and a scattered field.
Then, the inverse scattering problems focus on determining the nature of the inhomogeneity from
a knowledge of the scattered field \cite{Bleistein2001Book,ColtonSIAMReview2000}, which
have played important roles in diverse scientific areas
such as radar and sonar, geophysical exploration, medical imaging and nano-optics.

Deterministic computational methods for inverse scattering problems can be classified into two categories:
nonlinear optimization based iterative methods \cite{Bao2015TopicReview,Metivier2016IP,Natterer1995IP}
and imaging based direct methods \cite{Cakoni2006Book,Cheney2001IP}.
Direct methods are called qualitative methods which need no direct solvers and visualize the scatterer by highlighting
its boundary with designed imaging functions.
Iterative methods are usually called quantitative methods, which aim at providing some functions to represent the scatterer.
Because a sequence of direct and adjoint scattering problems need to be solved, the quantitative methods are computationally intensive.

This paper is concerned with the nonlinear optimization based iterative methods, especially focus
on the recursive linearization method (RLM) for inverse medium scattering problems \cite{Bao2015TopicReview}.
Although the computational obstacle can be handled in some circumstances,
the accuracy of the forward solver is still a critical topic, particularly for applications in seismic exploration \cite{Fichtner2011Book}
and medical imaging \cite{Koponen2014IEEE}.
A lot of efficient forward solvers based on finite difference method, finite element methods and spectral methods
have been proposed \cite{Teresa2006IP,Wang1997JASA}.
Here, we will not propose a new forward solver to reduce the computational load, but attempt to reformulate the nonlinear optimization model based
on Bayesian inverse framework which can incorporate statistical properties of the model errors induced by rough forward solvers.
By using the statistical properties, we aim to reduce the computational load for the inverse procedure.

In order to give a clear sketch of our idea, let us provide a concise review of the Bayesian inverse methods according to our purpose.
Denote $X$ to be some separable Banach space, then the forward problem usually modeled as follows
\begin{align}\label{forwardForm}
d = \mathcal{F}(m) + \epsilon,
\end{align}
where $d \in \mathbb{C}^{N_{d}}$ ($N_{d} \in \mathbb{N}^{+}$) stands for the measured data, $m \in X$ represents the interested parameter and
$\epsilon$ denotes noise.
For inverse scattering problems, $m$ is just the scatterer, $\mathcal{F}$ represents a Helmholtz equation combined with some
measurement operator.
The nonlinear optimization based iterative methods just formulate inverse problem as follows
\begin{align}\label{optimiFormu}
\min_{m \in X} \Bigg\{ \frac{1}{2}\big\|d - \mathcal{F}(m)\big\|_{2}^{2} + \mathcal{R}(m) \Bigg\},
\end{align}
where $\mathcal{R}(\cdot)$ stands for some regularization operator and $\|\cdot\|_{2}$ represents the $\ell^{2}$-norm.

Different to the minimization problem (\ref{optimiFormu}), Bayesian inverse methods reformulate the inverse problem as a stochastic inference problem,
which has the ability to give uncertainty quantifications
\cite{inverse_fluid_equation,Besov_prior,Junxiong2016IP,book_comp_bayeisn,acta_numerica}.
Bayesian inverse methods aim to provide a complete posterior information, however, it can also offer a point estimate.
Up to now, there are usually two frequently used point estimators:
maximum a posteriori (MAP) estimate and conditional mean (CM) estimate \cite{Tenorio2006Book}.
For problems defined on finite dimensional space, MAP estimate is obviously just the solution of the minimization problem (\ref{optimiFormu}),
which is illustrated rigorously in \cite{book_comp_bayeisn}.
Different to the finite dimensional case, only recently, serious results for relationships between MAP estimates and
minimization problem (\ref{optimiFormu}) are obtained in \cite{Burger2014IP,MAPSmall2013,Dunlop2016IP} when $X$ is an infinite dimensional space.
Simply speaking, if minimization problem (\ref{optimiFormu}) has been used to solve our inverse problem,
then an assumption has been made that is the noise $\epsilon$ is sampled from some Gaussian distribution $\mathcal{N}(\bar{\epsilon},\Sigma_{\epsilon})$
with mean $\bar{\epsilon}$ and covariance operator $\Sigma_{\epsilon}$.

In real world applications, we would like to use a fast forward solver (limited accuracy) to obtain an estimation as accurately as possible.
Hence, the noise usually not only brought by inaccurate measurements but also induced by
a rough forward solver and inaccurate physical assumptions \cite{Calvetti2017}.
Let us denote $\mathcal{F}_{a}(\cdot)$ to be the forward operator related to some rough forward solver, then (\ref{forwardForm}) can be rewrite as follows by following the methods used in \cite{Koponen2014IEEE}
\begin{align}\label{forwardForm2}
d = \mathcal{F}_{a}(m) + (\mathcal{F}(m) - \mathcal{F}_{a}(m)) + \epsilon.
\end{align}
By denoting $\xi := (\mathcal{F}(m) - \mathcal{F}_{a}(m))$, we obtain that
\begin{align}\label{forwardForm3}
d = \mathcal{F}_{a}(m) + \xi + \epsilon.
\end{align}
From the perspective of Bayesian methods, we can model $\xi$ as a random variable which obviously has the following two important features
\begin{enumerate}
  \item $\xi$ depend on the unknown function $m$;
  \item $\xi$ may distributed according to a complicated probability measure.
\end{enumerate}

For feature (1), we can relax this tough problem to assume that $\xi$ is independent of $m$ but the probability distribution
of $\xi$ and the prior probability measure of $m$ are related with each other \cite{Lasanen2012IPI}.
For feature (2), to the best of our knowledge, the existing literatures only provide a compromised methods that is
assume $\xi$ sampled from some Gaussian probability distributions \cite{Junxiong2016,Koponen2014IEEE}.
Here, we attempt to provide a more realistic assumptions for the probability measures of the random variable $\xi$.

Noticing that Bayes' formula is also one of fundamental tools for investigations about statistical machine learning \cite{PR2006Book} which is a field
attracts numerous researchers from various fields, e.g., computer science, statistics and mathematics.
Notice that for problems such as background subtraction \cite{Yong2017IEEE}, low-rank matrix factorization \cite{Zhao2015IEEE}
and principle component analysis \cite{MENG2012487,Zhao2014ICML},
learning algorithms deduced by Bayes' formula are useful and the errors brought by inaccurate forward modeling also appears.
For modeling errors appeared in machine learning tasks, Gaussian mixture model is widely used since it can
approximate any probability measure in some sense \cite{PR2006Book}.

Gaussian mixture distributions usually have the following form of density function
\begin{align}
\sum_{k = 1}^{K}\pi_{k} \mathcal{N}(\cdot \,| \,\zeta_{k},\Sigma_{k}),
\end{align}
where $\mathcal{N}(\cdot \,| \,\zeta_{k},\Sigma_{k})$ stands for a Gaussian probability density function with
mean value $\zeta_{k}$ and covariance matrix $\Sigma_{k}$ and for every $k$, $\pi_{k}\in (0,1)$ satisfy
$\sum_{k=1}^{K}\pi_{k} = 1$.
In the following, we always assume that the measurement noise $\epsilon$ is a Gaussian random variable with mean $0$
and covariance matrix $\nu I$ ($\nu \in \mathbb{R}^{+}$ and $I$ is an identity matrix).
For our problem, we can intuitively provide the following optimization problem if we assume $\xi$
sampled from some Gaussian mixture probability distributions
\begin{align}\label{modelQ1}
\min_{m\in X}\Bigg\{-\ln\Big( \sum_{k = 1}^{K}\pi_{k} \mathcal{N}(d-\mathcal{F}_{a}(m) \,| \, \zeta_{k},\Sigma_{k} + \nu I) \Big) + \mathcal{R}(m) \Bigg\}.
\end{align}

In the machine learning field, there usually have a lot of sampling data and the forward problems are not computationally intensive
compared with the inverse medium scattering problem. Hence, they use alternative iterative methods to
find the optimal solution and estimate the modeling error simultaneously \cite{Zhao2015IEEE}.
However, considering the lack of learning data and the high computational load of our forward problems,
we can not trivially generalize their alternative iterative methods to our case.
In order to employ Gaussian mixture distribution, we will meet the following three problems
\begin{enumerate}
  \item Under which conditions, Bay's formula and MAP estimate with Gaussian mixture distribution hold in infinite-dimensional space;
  \item How to construct learning examples and how to learn the parameters in Gaussian mixture distributions.
  Firstly, since we can hardly have so many learning examples as for the usual machine learning problem, we will meet a situation that is
  the number of learning examples are smaller than the number of discrete points which is also an ill-posed problem.
  Secondly, the solution of Helmholtz equation is a complex valued function. Because of that, we should develop learning
  algorithms for complex valued variables which is different to the classical cases for machine learning tasks \cite{PR2006Book,Yong2017IEEE}.
  \item For the complicated minimization problem (\ref{modelQ1}), how to construct a suitable iterative type method, i.e., some modified RLM.
\end{enumerate}

In this paper, we provide a primitive study about these three problems. Theoretical foundations for using
Gaussian mixture distributions in infinite-dimensional space problems have been established.
Learning algorithm has been designed based on the relationships between real Gaussian distribution and
complex Gaussian distribution. By carefully calculations, a modified RLM name as Gaussian mixture recursive linearization method (GMRLM)
has been proposed to efficiently solve the inverse medium problem with multi-frequencies data.
Numerical examples are finally reported to illustrate the effectiveness of the proposed method.

The outline of this paper is as follows.
In Section 2, general Bayesian inverse method with Gaussian mixture noise model is established and
the relationship between MAP estimators with classical regularization methods is also discussed.
In Section 3, well-posedness of inverse medium scattering problem in the Bayesian sense is proved.
Then, we propose the learning algorithm for Gaussian mixture distribution by generalizing the real valued
expectation-maximization (EM) algorithm to complex valued EM algorithm.
At last, we deduce the Gaussian mixture recursive linearization method.
In Section 4, two typical numerical examples are given, which illustrate the effectiveness of the proposed methods.

%%%% ------------------------------------------------------------------------------------------------------------------------------

\section{Bayesian inverse theory with Gaussian mixture distribution}\label{BayeTheoSection}

In this section, we prove the well-posedness and illustrate the validity of MAP estimate of inverse problems under the Bayesian inverse framework
when the noise is assumed to be a random variable sampled from a complex valued Gaussian mixture distribution.

Before diving into the main contents, let us provide a brief notation list which
will be used in all of the following parts of this paper.

\textbf{Notations:}
\begin{itemize}
  \item For an integer $N$, denote $\mathbb{C}^{N}$ as $N$-dimensional complex vector space;
  $\mathbb{R}^{+}$ and $\mathbb{N}^{+}$ represent positive real numbers and positive integers respectively;
  \item For a Banach space $X$, $\|\cdot\|_{X}$ stands for the norm defined on $X$ and, particularly, $\|\cdot\|_{2}$ represents
  the $\ell^{2}$-norm of $\ell^{2}$ space.
  \item For a matrix $\Sigma$, denote its determinant as $\det(\Sigma)$;
  \item Denote $B(m,R)$ as a ball with center $m$ and radius $R$. Particularly, denote $B_{R} := B(0,R)$ when the ball is centered at origin;
  \item Denote $X$ and $Y$ to be some Banach space;
  For an operator $F :\, X \rightarrow Y$, denote $F'(x_{0})$ as the Fr\'{e}chet derivative of $F$ at $x_{0} \in X$.
  \item Denote $\text{Re}(\xi)$, $\text{Imag}(\xi)$, $\xi^{T}$, $\xi^{H}$ and $\bar{\xi}$ as the real part, imaginary part, transpose, conjugate transpose
  and complex conjugate of $\xi \in \mathbb{C}^{N}$ respectively;
  \item The notation $\eta \sim p(\eta)$ stands for a random variable $\eta$ obeys the
  probability distribution with density function $p(\cdot)$.
\end{itemize}

Let $\mathcal{N}_{c}(\eta \,|\, \zeta,\Sigma)$ represents the density function of
$N_{d}$-dimensional complex valued Gaussian probability distribution \cite{Goodman1963Annals} defined as follows
\begin{align}
\mathcal{N}_{c}(\eta \, | \, \zeta,\Sigma) :=
 \frac{1}{(\pi)^{N_{d}}\det(\Sigma)}
\exp\left( -\Big\|\eta-\zeta\Big\|_{\Sigma}^{2} \right),
\end{align}
where $\zeta$ is a $N_{d}$-dimensional complex valued vector, $\Sigma$ is a positive definite Hermitian matrix
and $\|\cdot\|_{\Sigma}^{2}$ is defined as follow
\begin{align}
\big\|\eta-\zeta\big\|_{\Sigma}^{2} :=
\big(\eta-\zeta\big)^{H} \, \Sigma^{-1} \, \big(\eta-\zeta\big),
\end{align}
with the superscript $H$ stands for conjugate transpose.
Denote $\eta := \xi + \epsilon$, then formula (\ref{forwardForm3}) can be written as follows
\begin{align}
d = \mathcal{F}_{a}(m) + \eta,
\end{align}
where
\begin{align}
d \in \mathbb{C}^{N_{d}}, \quad \eta \sim \sum_{k = 1}^{K}\pi_{k}\mathcal{N}_{c}(\eta \,|\, \zeta_{k},\Sigma_{k} + \nu I),
\end{align}
with $N_{d}$, $K$ denote some positive integers and $\nu \in \mathbb{R}^{+}$.

Before going further, let us provide the following basic assumptions about the approximate forward operator $\mathcal{F}_{a}$.

\textbf{Assumption 1.}
\begin{enumerate}
  \item for every $\epsilon > 0$ there is $M = M(\epsilon) \in \mathbb{R}$, $C\in\mathbb{R}$ such that, for all $m \in X$,
  \begin{align*}
  \|\mathcal{F}_{a}(m)\|_{2} \leq C \exp(\epsilon\|m\|_{X}^{2} + M).
  \end{align*}
  \item for every $r > 0$ there is $K = K(r) > 0$ such that, for all $m \in X$ with
  $\|m\|_{X} < r$, we have
  \begin{align*}
  \|\mathcal{F}_{a}'(m)\|_{op} \leq K,
  \end{align*}
  where $\|\cdot\|_{op}$ denotes the operator norm.
\end{enumerate}

At this stage, we need to provide some basic notations of the Bayesian inverse method when $m$ in some infinite-dimensional space.
Following the work \cite{inverse_fluid_equation,acta_numerica}, let $\mu_{0}$ stands for the prior probability measure defined on
a separable Banach space $X$ and denote $\mu^{d}$ to be the posterior probability measure.
Then the Bayes' formula may be written as follows
\begin{align}
\frac{d\mu^{d}}{d\mu_{0}}(m) & = \frac{1}{Z(d)} \exp\Big( \Phi(m;d) \Big), \label{DefineMuY} \\
Z(d) & = \int_{X} \exp\Big( \Phi(m;d) \Big)\mu_{0}(dm),  \label{DefineOfZd}
\end{align}
where $\frac{d\mu^{d}}{d\mu_{0}}(\cdot)$ represents the Radon-Nikodym derivative and
\begin{align}
\Phi(m;d) := \ln\Bigg\{\sum_{k = 1}^{K} \pi_{k} \frac{1}{\pi^{N_{d}}\det(\Sigma_{k} + \nu I)}
\exp\left( -\Big\|d-\mathcal{F}_{a}(m)-\zeta_{k}\Big\|_{\Sigma_{k} + \nu I}^{2} \right) \Bigg\}.
\end{align}

%%%% ------------------------------------------------------------------------------------------------------------------------------

\subsection{Well-posedness}

In this subsection, we prove the following results which demonstrate formula (\ref{DefineMuY}) and (\ref{DefineOfZd}) under
some general conditions.
\begin{theorem}\label{wellPosedBaye}
Let Assumption 1 holds for some $\epsilon$, $r$, $K$ and $M$. Assume that $X$ is some separable Banach space, $\mu_{0}(X) = 1$ and
that $\mu_{0}(X\cap B) > 0$ for some bounded set $B$ in $X$.
In addition, we assume $\int_{X} \exp(2\epsilon \|m\|_{X}^{2}) \mu_{0}(dm) < \infty$.
Then, for every $d \in \mathbb{C}^{N_{d}}$,
$Z(d)$ given by (\ref{DefineOfZd}) is positive and the probability measure $\mu^{d}$ given by (\ref{DefineMuY})
is well-defined. In addition, there is $C = C(r) > 0$ such that, for all $d_{1}, d_{2} \in B(0,r)$
\begin{align*}
d_{\text{Hell}}(\mu^{d_{1}}, \mu^{d_{2}}) \leq C \|d_{1} - d_{2}\|_{2},
\end{align*}
where $d_{\text{Hell}}(\cdot,\cdot)$ denotes the Hellinger distance defined for two probability measures.
\end{theorem}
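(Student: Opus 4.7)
The plan is to follow the standard Stuart-type framework for Bayesian well-posedness in infinite dimensions, suitably adapted to the log-sum-exp structure of the complex Gaussian mixture potential $\Phi(m;d)$. Two things must be established: (i) $0 < Z(d) < \infty$, so that $\mu^d$ is a well-defined probability measure, and (ii) Lipschitz dependence of $\mu^d$ on $d$ in the Hellinger metric. The novelty relative to the single-Gaussian case lies purely in controlling $\Phi$ when it is the logarithm of a convex combination of Gaussian densities; once pointwise bounds on $\Phi$ and on $|\Phi(\cdot;d_1)-\Phi(\cdot;d_2)|$ are in hand, the remaining steps are templated.

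First I would derive a uniform upper bound. Each complex Gaussian density is bounded by its peak value $\pi^{-N_d}/\det(\Sigma_k+\nu I)$, so $\sum_k \pi_k \mathcal{N}_c(d-\mathcal{F}_a(m)-\zeta_k\,|\,\zeta_k,\Sigma_k+\nu I) \leq M_1$ uniformly in $m$ and $d$, giving $\Phi(m;d)\leq \ln M_1$ and hence $Z(d)\leq M_1<\infty$. For positivity of $Z(d)$, I would restrict to the bounded set $X\cap B$ of the hypothesis: on it $\|m\|_X\leq R$, so by Assumption 1(1), $\|\mathcal{F}_a(m)\|_2\leq C\exp(\epsilon R^2+M)$, whence each $\|d-\mathcal{F}_a(m)-\zeta_k\|^2_{\Sigma_k+\nu I}$ is bounded by a constant $M_2=M_2(r,R,\zeta_k,\Sigma_k,\nu)$. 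This yields $\Phi(m;d)\geq -M_2$ on $X\cap B$, so $Z(d)\geq \mu_0(X\cap B)\exp(-M_2)>0$.

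For the Lipschitz estimate I would use the standard decomposition
\begin{align*}
d^2_{\text{Hell}}(\mu^{d_1},\mu^{d_2}) = \frac{1}{2}\int_X\left(\frac{e^{\Phi(m;d_1)/2}}{\sqrt{Z(d_1)}} - \frac{e^{\Phi(m;d_2)/2}}{\sqrt{Z(d_2)}}\right)^2 d\mu_0(m),
\end{align*}
split into a term involving $|e^{\Phi(m;d_1)/2}-e^{\Phi(m;d_2)/2}|$ (with $Z(d_1)^{-1/2}$ factored out) and a term involving $|Z(d_1)^{-1/2}-Z(d_2)^{-1/2}|$. The uniform upper bound $\Phi\leq \ln M_1$ makes $x\mapsto e^{x/2}$ Lipschitz on the relevant range, reducing everything to an estimate of $|\Phi(m;d_1)-\Phi(m;d_2)|$ by $C(m)\|d_1-d_2\|_2$ with $C(m)$ square-integrable against $\mu_0$. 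The crucial calculation is that $\nabla_d\Phi(m;d)$ is a convex combination of the partial gradients $\nabla_d\ln g_k = -2(\Sigma_k+\nu I)^{-1}(d-\mathcal{F}_a(m)-\zeta_k)$ (the mixing weights being the posterior responsibilities, which sum to one), so
\begin{align*}
|\Phi(m;d_1)-\Phi(m;d_2)|\leq C_1\max_k\!\left(1+\|d\|_2+\|\mathcal{F}_a(m)\|_2\right)\|d_1-d_2\|_2 \leq C_2\bigl(1+e^{\epsilon\|m\|_X^2+M}\bigr)\|d_1-d_2\|_2
\end{align*}
for $d$ on the segment between $d_1,d_2\in B(0,r)$, by Assumption 1(1). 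Squaring and using the hypothesis $\int_X e^{2\epsilon\|m\|_X^2}d\mu_0<\infty$ makes the dominating function integrable; the same pointwise bound plus dominated convergence handles $|Z(d_1)-Z(d_2)|$ and hence $|Z(d_1)^{-1/2}-Z(d_2)^{-1/2}|$ via the already proved positive lower bound on $Z$.

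The main obstacle, and the only place the mixture structure really bites, is the convex-combination argument for $\nabla_d\Phi$: one must confirm that the soft-max weights $\pi_k g_k/\sum_j\pi_j g_j$ are genuine probabilities uniformly in $(m,d)$ so that the supremum bound over $k$ is justified, and that the constants remain independent of $m$. Once this is in place, the real/complex distinction contributes nothing beyond the factor of $2$ built into the complex quadratic form, and the rest of the proof mirrors the Gaussian case in \cite{acta_numerica}.
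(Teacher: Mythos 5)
Your proposal is correct and follows essentially the same route as the paper: both arguments reduce everything to a uniform upper bound on $\Phi$, a lower bound on a bounded set to get $Z(d)>0$, and the key estimate $\|\nabla_d\Phi(m;d)\|_2 \leq C\bigl(1+\|d\|_2+\exp(\epsilon\|m\|_X^2)\bigr)$ obtained by writing $\nabla_d\Phi$ as a responsibility-weighted convex combination of the individual Gaussian gradients, with square-integrability supplied by the hypothesis $\int_X\exp(2\epsilon\|m\|_X^2)\,\mu_0(dm)<\infty$. The only difference is that you unfold the Hellinger decomposition explicitly, whereas the paper verifies these same three conditions and then invokes Theorem 4.4 of the Dashti--Stuart framework as a black box; your version is in fact slightly more careful on the upper bound, since the peak value $\pi^{-N_d}/\det(\Sigma_k+\nu I)$ need not be $\leq 1$ as the paper tacitly assumes.
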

\begin{proof}
In order to prove this theorem, we need to verify three conditions stated in Assumption 4.2 and Theorem 4.4 in \cite{Dashti2014}.
Since
\begin{align*}
\sum_{k = 1}^{K} \pi_{k} \frac{1}{\pi^{N_{d}}\det(\Sigma_{k}+\nu I)}
\exp\left( -\Big\|d-\mathcal{F}_{a}(m)-\zeta_{k}\Big\|_{\Sigma_{k}+\nu I}^{2} \right) \leq 1,
\end{align*}
we know that
\begin{align}\label{Cond1}
\Phi(m;d) \leq 0.
\end{align}
In the following, we denote
\begin{align*}
f_{k}(d,m):= \big(d-\mathcal{F}_{a}(m)-\zeta_{k}\big)^{H} \, \big(\Sigma_{k}+\nu I\big)^{-1} \, \big(d-\mathcal{F}_{a}(m)-\zeta_{k}\big).
\end{align*}
Then, we have
\begin{align*}
\nabla_{d}f_{k}(d,m) & = (d - \mathcal{F}_{a}(m) - \zeta_{k})^{H}(\Sigma_{k}+\nu I)^{-1}
+ \overline{(d-\mathcal{F}_{a}(m)-\zeta_{k})^{H}(\Sigma_{k}+\nu I)^{-1}}    \\
& = 2\text{Re}\Big( (d - \mathcal{F}_{a}(m) - \zeta_{k})^{H}(\Sigma_{k}+\nu I)^{-1} \Big).
\end{align*}
Through some simple calculations, we find that
\begin{align}\label{DdPhi1}
\nabla_{d}\Phi(m;d) = - \sum_{k=1}^{K} 2 g_{k} \text{Re}\Big( (d - \mathcal{F}_{a}(m) - \zeta_{k})^{H}(\Sigma_{k}+\nu I)^{-1} \Big),
\end{align}
where
\begin{align}\label{gkDef}
g_{k} := \frac{\pi_{k}\mathcal{N}_{c}(d-\mathcal{F}_{a}(m) \,|\, \zeta_{k},\Sigma_{k}+\nu I)}
{\sum_{j=1}^{K}\pi_{j}\mathcal{N}_{c}(d-\mathcal{F}_{a}(m) \,|\, \zeta_{j},\Sigma_{j}+\nu I)}.
\end{align}
From the expression (\ref{DdPhi1}) and (i) of Assumption 1, we can deduce that
\begin{align}\label{BoundDd}
\|\nabla_{d}\Phi(m;d)\|_{2} \leq C\big( 1 + \|d\|_{2} + \exp(\epsilon\|m\|_{X}^{2}) \big).
\end{align}
where the constant $C$ depends on $K$, $\{\Sigma_{k}\}_{k=1}^{K}$ and $\{\zeta_{k}\}_{k = 1}^{K}$.
Considering (\ref{BoundDd}), we obtain
\begin{align}\label{Cond2}
|\Phi(m;d_{1}) - \Phi(m;d_{2})| \leq C\big( 1 + r + \exp(\epsilon\|m\|_{X}^{2}) \big) \|d_{1} - d_{2}\|_{2}.
\end{align}
By our assumptions, the following relation obviously hold
\begin{align}\label{Cond3}
C^{2}\big( 1 + r + \exp(\epsilon\|m\|_{X}^{2}) \big)^{2} \in L_{\mu_{0}}^{1}(X;\mathbb{R}).
\end{align}
At this stage, estimates (\ref{Cond1}), (\ref{Cond2}) and (\ref{Cond3}) verify Assumption 4.2 and conditions of Theorem 4.4 in \cite{Dashti2014}.
Employing theories constructed in \cite{Dashti2014}, we complete the proof.
\end{proof}

\begin{remark}
The assumptions of the prior probability measure are rather general, which include Gaussian probability measure
and TV-Gaussian probability measure \cite{TGPrior2016} for certain space $X$.
\end{remark}

%%%% ------------------------------------------------------------------------------------------------------------------------------

\subsection{MAP estimate}
Through MAP estimate, Bayesian inverse method and classical regularization method are in accordance with each other.
Because our aim is to develop an efficient optimization method, we need to demonstrate the validity of MAP estimate which
provide theoretical foundations for our method.

Firstly, let us assume that the prior probability measure $\mu_{0}$ is a Gaussian probability measure and
define the following functional
\begin{align}\label{MiniProForm}
J(m) = \left \{\begin{aligned}
& -\Phi(m;d) + \frac{1}{2} \|m\|_{E}^{2} \quad \text{if }m\in E, \text{ and} \\
& + \infty, \quad\quad\quad\quad\quad\quad\quad\,\,\, \text{else.}
\end{aligned}\right.
\end{align}
Here $(E,\|\cdot\|_{E})$ denotes the Cameron-Martin space associated to $\mu_{0}$.
In infinite dimensions, we adopt small ball approach constructed in \cite{MAPSmall2013}.
For $m \in E$, let $B(m,\delta) \in X$ be the open ball centred at $m \in X$ with radius $\delta$ in $X$.
Then, we can prove the following theorem which encapsulates the idea that probability is maximized where $J(\cdot)$ is minimized.

\begin{theorem}\label{SmallBall}
Let Assumption 1 holds and assume that $\mu_{0}(X) = 1$. Then the function $J(\cdot)$ defined by (\ref{MiniProForm}) satisfies, for any
$m_{1}, m_{2} \in E$,
\begin{align*}
\lim_{\delta\rightarrow 0}\frac{\mu(B(m_{1},\delta))}{\mu(B(m_{2},\delta))} =
\exp\Big( J(m_{2}) - J(m_{1}) \Big).
\end{align*}
\end{theorem}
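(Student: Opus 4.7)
The plan is to follow the small-ball strategy of \cite{MAPSmall2013}, decomposing the posterior mass on $B(m_i,\delta)$ into a ``$\Phi$-part'' coming from the likelihood and a ``$\mu_0$-part'' coming from the Gaussian prior, and showing that each part contributes the expected factor as $\delta\to 0$. Write
\begin{align*}
\mu(B(m_i,\delta)) = \frac{1}{Z(d)} \int_{B(m_i,\delta)} \exp\bigl(\Phi(u;d)\bigr)\, \mu_{0}(du), \qquad i=1,2,
\end{align*}
so that the ratio of posterior masses splits as a quotient of two such integrals. The goal is to extract the pointwise value $\exp(\Phi(m_i;d))$ from each and then apply the Gaussian small-ball/Cameron--Martin identity to the residual $\mu_{0}$ integrals.

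First I would show that $\Phi(\cdot;d)$ is locally Lipschitz on $X$. This is essentially the same computation that produced estimate (\ref{BoundDd}) in the proof of Theorem \ref{wellPosedBaye}: differentiating $\Phi(m;d)$ in $m$, using (ii) of Assumption 1 to bound $\mathcal{F}_a'$ on bounded sets, and using that the weights $g_k$ in (\ref{gkDef}) are bounded by $1$, one obtains a bound of the form $|\Phi(u_1;d)-\Phi(u_2;d)|\leq L(r)\|u_1-u_2\|_X$ for $u_1,u_2$ in any fixed ball of radius $r$. Consequently, for $u\in B(m_i,\delta)$,
\begin{align*}
\Phi(m_i;d) - L\delta \leq \Phi(u;d) \leq \Phi(m_i;d) + L\delta,
\end{align*}
so that $\int_{B(m_i,\delta)}\exp(\Phi(u;d))\,\mu_0(du) = \exp(\Phi(m_i;d))\,\mu_0(B(m_i,\delta))\,(1+o(1))$ as $\delta\to 0$. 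This reduces the problem to computing $\lim_{\delta\to 0}\mu_0(B(m_1,\delta))/\mu_0(B(m_2,\delta))$.

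For the Gaussian ratio, I would invoke the classical small-ball lemma for Gaussian measures on a separable Banach space: for any $m\in E$,
\begin{align*}
\lim_{\delta\to 0}\frac{\mu_0(B(m,\delta))}{\mu_0(B(0,\delta))} = \exp\!\left(-\tfrac{1}{2}\|m\|_E^{2}\right).
\end{align*}
This is the content of Lemma 3.6/3.7 in \cite{MAPSmall2013}, proved by using the Cameron--Martin shift $\mu_0(B(m,\delta)) = \int_{B(0,\delta)} \exp(\langle m,v\rangle_E - \tfrac{1}{2}\|m\|_E^2)\,\mu_0(dv)$ together with the symmetry of $\mu_0$ (which, after replacing $\exp$ by $\cosh$, lets one sandwich the Gaussian integrand between $1$ and a quantity that concentrates on $\mu_0(B(0,\delta))$). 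Taking the quotient kills the common factor $\mu_0(B(0,\delta))$, and combining with the $\Phi$-step above gives
\begin{align*}
\lim_{\delta\to 0}\frac{\mu(B(m_1,\delta))}{\mu(B(m_2,\delta))} = \exp\!\Bigl((\Phi(m_1;d)-\tfrac{1}{2}\|m_1\|_E^{2}) - (\Phi(m_2;d)-\tfrac{1}{2}\|m_2\|_E^{2})\Bigr) = \exp\bigl(J(m_2)-J(m_1)\bigr),
\end{align*}
which is the claim.

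The main obstacle is the Gaussian small-ball lemma itself: the linear functional $v\mapsto\langle m,v\rangle_E$ is defined $\mu_0$-a.s.\ on $X$ but is generally not continuous on $X$, so one cannot simply pass to the limit inside the integral. The delicate step is to justify, uniformly in the shift $m\in\{m_1,m_2\}$, that the Cameron--Martin density averaged over $B(0,\delta)$ tends to $1$; this is exactly where the symmetrization trick and the fact that Gaussian small-ball probabilities decay at the same logarithmic rate in all directions are used. Since this lemma is already available from \cite{MAPSmall2013}, the present proof amounts to verifying the hypotheses needed to apply it — namely, local Lipschitz continuity and boundedness from above of $\Phi(\cdot;d)$ — both of which are immediate consequences of Assumption 1 and the explicit form of $\Phi$.
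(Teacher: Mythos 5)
Your proof is correct and rests on the same essential verification as the paper's: both arguments reduce the theorem to checking that $\Phi(\cdot;d)$ is bounded and locally Lipschitz in $m$ on bounded sets of $X$, using Assumption 1 and the explicit gradient formula with the weights $g_k \le 1$ (the paper additionally records a lower bound for $\Phi$ via Jensen's inequality, which in your version is subsumed by finiteness of $\Phi(m_i;d)$ at the ball centres together with the Lipschitz sandwich). The only difference is presentational: the paper then invokes Theorem 4.11 of \cite{Dashti2014} as a black box, whereas you unpack that theorem's proof — the splitting of the posterior ball mass into the likelihood factor and the Gaussian small-ball ratio, handled by the Cameron--Martin/symmetrization lemma of \cite{MAPSmall2013} — which makes the mechanism more transparent but adds no new hypotheses to verify.
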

\begin{proof}
In order to prove this theorem, let us verify the following two conditions concerned with $\Phi(m;d)$,
\begin{enumerate}
  \item for every $r > 0$ there exists $K = K(r) > 0$ such that, for all $m \in X$ with $\|m\|_{X} \leq r$ we have $\Phi(m;d) \geq K$.
  \item for every $r > 0$ there exists $L = L(r) > 0$ such that, for all $m_{1}, m_{2} \in X$ with $\|m_{1}\|_{X}, \|m_{2}\|_{X} < r$
  we have $|\Phi(m_{1};d) - \Phi(m_{2};d)| \leq L \|m_{1} - m_{2}\|_{X}$.
\end{enumerate}
For the first condition, by employing Jensen's inequality, we have
\begin{align*}
\Phi(m;d) & = \ln\Big( \sum_{k = 1}^{K}\pi_{k} \mathcal{N}_{c}\big(d - \mathcal{F}_{a}(m) \, | \, \zeta_{k}, \Sigma_{k}+\nu I\big) \Big) \\
& \geq \sum_{k = 1}^{K} \pi_{k} \ln\Bigg( \frac{1}{\pi^{N_{d}}|\Sigma_{k} + \nu I|}
\exp\bigg( -\big\| d-\mathcal{F}_{a}(m) - \zeta_{k} \big\|_{\Sigma_{k}+\nu I}^{2} \bigg) \Bigg) \\
& \geq \sum_{k = 1}^{K} \pi_{k} \Big( -\big\| d - \mathcal{F}_{a}(m) - \zeta_{k}
\big\|_{\Sigma_{k}+\nu I}^{2} - N_{d}\ln(\pi) - \ln(|\Sigma_{k}+\nu I|) \Big) \\
& \geq - C \big( 1 + \|d\|_{2}^{2} + \exp(\epsilon r^{2}) \big),
\end{align*}
where $C$ is a positive constant depends on $K$, $\{\pi_{k}\}_{k=1}^{K}$, $\{\Sigma_{k}\}_{k = 1}^{K}$, $\{\zeta_{k}\}_{k=1}^{K}$ and $N_{d}$.
Now, the first condition holds true by choosing $K = - C \big( 1 + \|d\|_{2}^{2} + \exp(\epsilon r^{2}) \big)$.

In order to verify the second condition, we denote
\begin{align*}
f_{k}(d,m):= \big(d-\mathcal{F}_{a}(m)-\zeta_{k}\big)^{H} \, \big(\Sigma_{k}+\nu I\big)^{-1} \, \big(d-\mathcal{F}_{a}(m)-\zeta_{k}\big),
\end{align*}
then focus on the derivative of $f_{k}$ with respect to $m$.
Through some calculations, we find that
\begin{align}
\nabla_{m}f_{k}(d,m) = - 2 \text{Re}\Big( (d-\mathcal{F}_{a}(m)-\zeta_{k})^{H}(\Sigma_{k}+\nu I)^{-1}\mathcal{F}_{a}'(m) \Big).
\end{align}
Hence, we have
\begin{align}\label{DePhi1}
\nabla_{m}\Phi(m;d) = - \sum_{k=1}^{K}2 g_{k} \text{Re}\Big( (d-\mathcal{F}_{a}(m)-\zeta_{k})^{H}(\Sigma_{k}+\nu I)^{-1}\mathcal{F}_{a}'(m) \Big),
\end{align}
where $g_{k}$ defined as in (\ref{gkDef}).
Using Assumption 1 and formula (\ref{DePhi1}), we find that
\begin{align}
|\Phi(m_{1};d) - \Phi(m_{2};d)| \leq C K (1+\|d\|_{2} + \exp(\epsilon \, r^{2})) \|m_{1} - m_{2}\|_{X}.
\end{align}
Let $L = C K (1+\|d\|_{2} + \exp(\epsilon \, r^{2}))$, obviously the second condition holds true.
Combining these two conditions with (\ref{Cond1}), we can complete the proof by using Theorem 4.11 in \cite{Dashti2014}.
\end{proof}

Now, if we assume $\mu_{0}$ is a TV-Gaussian probability measure, then we can define the following functional
\begin{align}\label{MiniProFormTG}
J(m) = \left \{\begin{aligned}
& -\Phi(m;d) + \lambda \|m\|_{\text{TV}} + \frac{1}{2} \|m\|_{E}^{2} \quad \text{if }m\in E, \text{ and} \\
& + \infty, \quad\quad\quad\quad\quad\quad\quad\quad\quad\quad\quad\quad\,\, \text{else.}
\end{aligned}\right.
\end{align}
Using similar methods as for the Gaussian case and the above functional (\ref{MiniProFormTG}), we can prove
a similar theorem to illustrate that the MAP estimate is also the minimal solution of
$\min_{m\in X}J(m)$.

%%%% ------------------------------------------------------------------------------------------------------------------------------

\section{Inverse medium scattering problem}\label{SecInverMedi}

In this section, we will apply the general theory developed in Section \ref{BayeTheoSection} to a specific
inverse medium scattering problem. Then we construct algorithms to learn the parameters appeared in the complex Gaussian
mixture distribution. At last, we give the model error compensation based recursive linearization method.
Since the model errors are estimated by some Gaussian mixture distributions, the proposed iterative method
is named as Gaussian mixture recursive linearization method (GMRLM).

Now, let us provide some basic settings of the inverse scattering problem considered in this paper.
In the following, we usually assume that the total field $u$ satisfies
\begin{align}\label{zongEq}
\Delta u + \kappa^{2}(1+q)u = 0 \quad \text{in }\mathbb{R}^{2},
\end{align}
where $\kappa > 0$ is the wavenumber, and $q(\cdot)$ is a real function known as the
scatterer representing the inhomogeneous medium. We assume that the scatterer has a compact support contained
in the ball $B_{R} = \{ \mathbf{r}\in\mathbb{R}^{2} : \, |\mathbf{r}| < R \}$ with boundary
$\partial B_{R} = \{ \mathbf{r}\in\mathbb{R}^{2} : \, |\mathbf{r}| = R \}$, and satisfies
$-1 < q_{\text{min}} \leq q \leq q_{\text{max}} < \infty$, where $q_{\text{min}}$ and $q_{\text{max}}$ are two constants.

The scatterer is illuminated by a plane incident field
\begin{align}
u^{\text{inc}}(\mathbf{r}) = e^{i\kappa\mathbf{r}\cdot\mathbf{d}},
\end{align}
where $\mathbf{d} = (\cos\theta, \sin\theta) \in \{ \mathbf{r}\in\mathbb{R}^{2} \, : \, |\mathbf{r}| = 1 \}$ is the incident direction
and $\theta \in (0,2\pi)$ is the incident angle. Obviously, the incident field satisfies
\begin{align}\label{inEq}
\Delta u^{\text{inc}} + \kappa^{2} u^{\text{inc}} = 0 \quad \text{in }\mathbb{R}^{2}.
\end{align}

The total field $u$ consists of the incident field $u^{\text{inc}}$ and the scattered field $u^{s}$
\begin{align}\label{fenjieEq}
u = u^{\text{inc}} + u^{s}.
\end{align}
It follows from (\ref{zongEq}), (\ref{inEq}) and (\ref{fenjieEq}) that the scattered field satisfies
\begin{align}\label{scatterEq}
\Delta u^{s} + \kappa^{2}(1+q)u^{s} = -\kappa^{2}qu^{\text{inc}} \quad \text{in }\mathbb{R}^{2},
\end{align}
accompanied with the following Sommerfeld radiation condition
\begin{align}\label{radiatEq}
\lim_{|\mathbf{r}| \rightarrow \infty}r^{1/2}\big(\partial_{r}u^{s} - i\kappa u^{s}\big) = 0,
\end{align}
where $r = |\mathbf{r}|$.

%%%% ------------------------------------------------------------------------------------------------------------------------------

\subsection{Well-posedness in the sense of Bayesian formulation}\label{WellSubsec}

In this subsection, we suppose that the scatterer $q(\cdot)$ appeared in (\ref{zongEq}) has compact support
and $\text{supp}(q) \subset \Omega \subset B_{R}$ where $\Omega$ is a square region.
For the reader's convenience, we provide an illustration of this relation in Figure \ref{illuFig}.
\begin{figure}[htbp]
  \centering
  % Requires \usepackage{graphicx}
  \includegraphics[width = 0.35\textwidth]{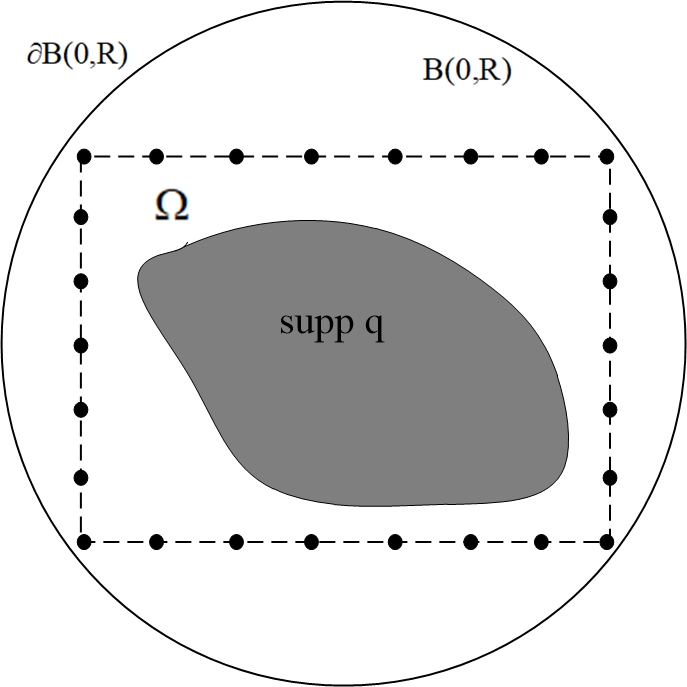}\\
  \caption{Geometry of the scattering problem.}\label{illuFig}
\end{figure}

Because the scatterer $q(\cdot)$ is assumed to have compact support, the
problem (\ref{scatterEq}) and (\ref{radiatEq}) defined on $\mathbb{R}^{2}$ can be reformulated to the following problem
defined on bounded domain \cite{Bao2015TopicReview}
\begin{align}\label{BoundedHelEq}
\left \{\begin{aligned}
& \Delta u^{s} + \kappa^{2}(1+q)u^{s} = -\kappa^{2}qu^{\text{inc}} \quad \text{in }B_{R}, \\
& \partial_{\mathbf{n}}u^{s} = \mathcal{T}u^{s} \quad \text{on }\partial B_{R},
\end{aligned}\right.
\end{align}
where $\mathcal{T}$ is the Dirichlet-to-Neumann (DtN) operator defined as follows: for any $\varphi \in H^{1/2}(\partial B_{R})$,
\begin{align}
(\mathcal{T}\varphi)(R,\theta) = \kappa\sum_{n\in\mathbb{Z}}\frac{H^{(1)'}_{n}(\kappa R)}{H^{(1)}_{n}(\kappa R)}\hat{\varphi}_{n}e^{in\theta}
\end{align}
with $H^{(1)}_{n}$ is the Hankel function of the first kind with order $n$ and
\begin{align*}
\hat{\varphi}_{n} = (2\pi)^{-1}\int_{0}^{2\pi} \varphi(R,\theta)e^{-in\theta}d\theta.
\end{align*}

For problem (\ref{BoundedHelEq}), we define the map $\mathcal{S}(q,\kappa)u^{\text{inc}}$
by $u^{s} = \mathcal{S}(q,\kappa)u^{\text{inc}}$ as in \cite{Bao2015TopicReview}.
From \cite{Bao2010StochasticSource,ColtonThirdBook}, we easily know that the following estimate holds for equations (\ref{BoundedHelEq})
\begin{align}\label{EstimateH}
\|u^{s}\|_{H^{2}(\Omega)} \leq C \|q\|_{L^{\infty}(\Omega)}\|u^{\text{inc}}\|_{L^{2}(B(0,R))}.
\end{align}
Considering Sobolev embedding theorem, we can define the following measurement operator
\begin{align}\label{MeaOp1}
\mathcal{M}(\mathcal{S}(q,\kappa)u^{\text{inc}})(x) = \big( u^{s}(x_{1}), \ldots, u^{s}(x_{N_{d}}) \big)^{T},
\end{align}
where $x_{i} \in \partial\Omega$, $i = 1,2,\ldots,N_{d}$, are the points where the wave field $u^{s}$ is measured.

In practice, we employ a uniaxial PML technique to transform the problem defined on the whole domain to
a problem defined on a bounded rectangular domain, as seen in Figure \ref{illuFig2}.
\begin{figure}[htbp]
  \centering
  % Requires \usepackage{graphicx}
  \includegraphics[width = 0.35\textwidth]{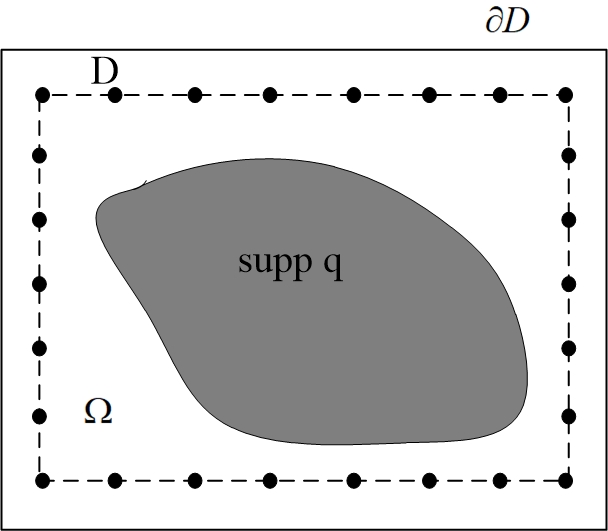}\\
  \caption{Geometry of the scattering problem with a uniaxial PML layer.}\label{illuFig2}
\end{figure}
Let $D$ be the rectangle which contain $\Omega = [x_{1},x_{2}] \times [y_{1},y_{2}]$
with $\text{supp} (q) \subset \Omega$ and let $d_{1}$ and $d_{2}$
be the thickness of the PML layers along $x$ and $y$, respectively. Let $s_{1}(x) = 1+i\sigma_{1}(x)$ and $s_{2}(y) = 1+i\sigma_{2}(y)$
be the model medium property and usually we can simply take
\begin{align*}
\sigma_{1}(x) = \left\{\begin{aligned}
& \sigma_{0}\left( \frac{x-x_{2}}{d_{1}} \right)^{p} \quad \text{for }x_{2} < x < x_{2} + d_{1} \\
& 0 \quad\quad\quad\quad\quad\quad\,\,\,\,\, \text{for }x_{1} \leq x \leq x_{2} \\
& \sigma_{0}\left( \frac{x_{1} - x}{d_{1}} \right)^{p} \quad \text{for }x_{1} - d_{1} < x < x_{1},
\end{aligned}\right.
\end{align*}
and
\begin{align*}
\sigma_{2}(y) = \left\{\begin{aligned}
& \sigma_{0}\left( \frac{y-y_{2}}{d_{2}} \right)^{p} \quad \text{for }y_{2} < y < y_{2} + d_{2} \\
& 0 \quad\quad\quad\quad\quad\quad\,\,\,\, \text{for }y_{1} \leq y \leq y_{2} \\
& \sigma_{0}\left( \frac{y_{1} - y}{d_{2}} \right)^{p} \quad \text{for }y_{1} - d_{2} < y < y_{1},
\end{aligned}\right.
\end{align*}
where the constant $\sigma_{0} > 1$ and the integer
$p \geq 2$. Denote $$s = \text{diag}(s_{2}(x)/s_{1}(x), s_{1}(x)/s_{2}(y)),$$
then the truncated PML problem can be defined as follow
\begin{align}\label{PMLBoundedHelEq}
\left \{\begin{aligned}
& \nabla\cdot(s \nabla u^{s}) + s_{1}s_{2}\kappa^{2}(1+q)u^{s} = -\kappa^{2}qu^{\text{inc}} \quad \text{in }D, \\
& u^{s} = 0 \quad \text{on }\partial D.
\end{aligned}\right.
\end{align}

Similar to the physical problem (\ref{BoundedHelEq}), we introduce the map $\mathcal{S}_{a}(q,\kappa)$
defined by $u^{s}_{a} = \mathcal{S}_{a}(q,\kappa)u^{\text{inc}}$
where $u^{s}_{a}$ stands for the solution of the truncated PML problem (\ref{PMLBoundedHelEq}).
Through similar methods for equations (\ref{BoundedHelEq}), we can prove that $u^{s}_{a}$ is a continuous function and satisfies
\begin{align}\label{EstimateHDis}
\|u^{s}_{a}\|_{L^{\infty}(D)} \leq C  \|q\|_{L^{\infty}(D)} \|u^{\text{inc}}\|_{L^{2}(D)}.
\end{align}
Now, we can define the measurement operator similar to (\ref{MeaOp1}) as follow
\begin{align}\label{MeaOp2}
\mathcal{M}(\mathcal{S}_{a}(q,\kappa)u^{\text{inc}})(x) = \big( u^{s}_{a}(x_{1}), \ldots, u^{s}_{a}(x_{N_{d}}) \big)^{T},
\end{align}
where $x_{i} \in \partial D$, $i = 1,2,\ldots,N_{d}$.

In order to introduce appropriate Gaussian probability measures, we give the following assumptions related to the covariance operator.

\textbf{Assumption 2.} Denote $A$ to be an operator, densely defined on the Hilbert space $\mathcal{H} = L^{2}(D;\mathbb{R}^{d})$,
satisfies the following properties:
\begin{enumerate}
  \item $A$ is positive-definite, self-adjoint and invertible;
  \item the eigenfunctions $\{\varphi_{j}\}_{j\in\mathbb{N}}$ of $A$, form an orthonormal basis for $\mathcal{H}$;
  \item the eigenvalues satisfy $\alpha_{j} \asymp j^{2/d}$, for all $j\in\mathbb{N}$;
  \item there is $C > 0$ such that
  \begin{align*}
  \sup_{j\in\mathbb{N}}\left( \|\varphi_{j}\|_{L^{\infty}} + \frac{1}{j^{1/d}}\text{Lip}(\varphi_{j}) \right) \leq C.
  \end{align*}
\end{enumerate}

At this moment, we can show well-posedness for inverse medium scattering problem with some Gaussian prior probability measures.
For a constant $s > 1$, we consider the prior probability measure to be a Gaussian measure $\mu_{0} := \mathcal{N}(\bar{q},A^{-s})$
where $\bar{q}$ is the mean value and the operator $A$ satisfies Assumption 2.
In addition, we take $X = C^{t}$ with $t < s - 1$. Then we know that $\mu_{0}(X) = 1$ by Example 2.19 shown in \cite{Dashti2014}.

For the scattering problem, we can take $\mathcal{F}_{a}(q) = \mathcal{M}(\mathcal{S}_{a}(q,\kappa)u^{\text{inc}})$ and let
the noise $\eta$ obeys a Gaussian mixture distribution with density function
$$\sum_{k = 1}^{K}\pi_{k}\mathcal{N}_{c}(\eta \,|\, \zeta_{k},\Sigma_{k}+\nu I).$$
Then, the measured data $d \in \mathbb{C}^{N_{d}}$ are
\begin{align}\label{DataSpecPro}
d = \mathcal{F}_{a}(q) + \eta.
\end{align}
\begin{theorem}\label{BayeTheoScatter}
For the two dimensional problem (\ref{PMLBoundedHelEq})(problem (\ref{BoundedHelEq})), if we assume space $X$, $q \sim \mu_{0}$ and
$\eta$ are specified as previous two paragraphes in this subsection. Then, the Bayesian inverse problems of recovering input
$q \in X$ of problem (\ref{PMLBoundedHelEq})(problem (\ref{BoundedHelEq})) from data $d$ given as in (\ref{DataSpecPro})
is well formulated: the posterior $\mu^{d}$ is well defined in $X$ and it is absolutely continuous with respect to $\mu_{0}$,
the Radon-Nikodym derivative is given by (\ref{DefineMuY}) and (\ref{DefineOfZd}). Moreover, there is $C = C(r)$ such that,
for all $d_{1},d_{2} \in \mathbb{C}^{N_{d}}$ with $|d_{1}|,|d_{2}| \leq r$,
\begin{align}
d_{Hell}(\mu^{d_{1}},\mu^{d_{2}}) \leq C \|d_{1} - d_{2}\|_{2}.
\end{align}
\end{theorem}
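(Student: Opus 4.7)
The plan is to reduce Theorem \ref{BayeTheoScatter} to the general well-posedness result Theorem \ref{wellPosedBaye}, so that I only need to verify (a) Assumption 1 for the map $\mathcal{F}_a(q) = \mathcal{M}(\mathcal{S}_a(q,\kappa)u^{\text{inc}})$, and (b) the prior-measure hypotheses $\mu_0(X)=1$, $\mu_0(X\cap B)>0$ for some bounded $B\subset X$, and $\int_X\exp(2\epsilon\|q\|_X^2)\mu_0(dq)<\infty$. The two variants of the theorem are handled in parallel: for the PML problem (\ref{PMLBoundedHelEq}) the $L^\infty$ estimate (\ref{EstimateHDis}) is directly available, while for the exact bounded problem (\ref{BoundedHelEq}) I would combine the $H^2$ estimate (\ref{EstimateH}) with the two-dimensional Sobolev embedding $H^2(\Omega)\hookrightarrow C^0(\overline{\Omega})$ so that the pointwise sampler $\mathcal{M}$ is well defined and bounded.

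For Assumption 1(i), the embedding $X=C^t\hookrightarrow L^\infty(D)$ together with (\ref{EstimateHDis}) gives $\|u^s_a\|_{L^\infty(D)}\le C\|q\|_X$, hence $\|\mathcal{F}_a(q)\|_2\le C_1\|q\|_X$. This linear bound is trivially dominated by $C_2\exp(\epsilon\|q\|_X^2+M)$ for any $\epsilon>0$ with $M$ chosen sufficiently large. For Assumption 1(ii) I would establish the Fr\'echet differentiability of $q\mapsto\mathcal{S}_a(q,\kappa)u^{\text{inc}}$ by identifying its derivative in direction $h\in X$ with the solution $v$ of the linearized PML problem
\begin{align*}
\nabla\cdot(s\nabla v)+s_1 s_2\kappa^2(1+q)v=-\kappa^2 h\,(u^{\text{inc}}+u^s_a)\text{ in }D,\quad v=0\text{ on }\partial D,
\end{align*}
and then proving the uniform bound $\|v\|_{L^\infty(D)}\le K(r)\|h\|_X$ for $\|q\|_X\le r$ by the same stability arguments that yield (\ref{EstimateHDis}), using the sign constraint $-1<q_{\min}\le q$ to keep $1+q$ bounded away from zero. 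Composition with the bounded sampler $\mathcal{M}$ delivers the required operator-norm control on $\mathcal{F}_a'(q)$.

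The prior-measure conditions are obtained from standard Gaussian theory. The full-mass statement $\mu_0(X)=1$ for $\mu_0=\mathcal{N}(\bar q, A^{-s})$ on $X=C^t$ with $t<s-1$ is precisely Example 2.19 of \cite{Dashti2014} under Assumption 2; nondegeneracy of $\mu_0$ gives $\mu_0(B(\bar q,1))>0$, supplying the bounded set $B$; and Fernique's theorem yields some $\alpha>0$ with $\int_X\exp(\alpha\|q\|_X^2)\mu_0(dq)<\infty$, so that choosing $\epsilon<\alpha/2$ in Assumption 1(i) (which is permitted because the polynomial bound above holds for every $\epsilon>0$) meets the integrability hypothesis of Theorem \ref{wellPosedBaye}. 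Substituting into that theorem produces both the Radon--Nikodym representation (\ref{DefineMuY})--(\ref{DefineOfZd}) and the Hellinger Lipschitz estimate.

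The main obstacle is the Fr\'echet differentiation step: one must verify well-posedness of the linearized PML equation with complex coefficients in $L^\infty(D)$ uniformly on bounded sets of $q$, and control the quadratic remainder in the Fr\'echet expansion; all other verifications reduce to bookkeeping against Sobolev embeddings, the $L^\infty$ bound (\ref{EstimateHDis}), and Fernique's theorem.
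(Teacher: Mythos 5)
Your proposal follows the paper's proof essentially verbatim: both arguments reduce to Theorem \ref{wellPosedBaye} by verifying Assumption 1 through the stability estimate (\ref{EstimateHDis}) for condition (i) and the linearized PML problem for condition (ii), and you additionally spell out the prior-measure hypotheses (Example 2.19 for $\mu_0(X)=1$, Fernique for the integrability condition) that the paper leaves implicit. One small slip: differentiating (\ref{PMLBoundedHelEq}) gives the source term $-\kappa^2\,\delta q\,(u^{\text{inc}}+s_1s_2u_a^s)$ for the linearized equation, so the factor $s_1s_2$ multiplying $u_a^s$ is missing from your display, though this does not affect the resulting operator-norm bound since $s_1,s_2$ are bounded on $D$.
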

\begin{proof}
From Section \ref{BayeTheoSection}, we easily know that Theorem \ref{BayeTheoScatter} holds when Assumption 1 is satisfied.
According to the estimates (\ref{EstimateHDis}) and (\ref{MeaOp2}), we find that
\begin{align}
\|\mathcal{F}_{a}(q)\|_{2} \leq C \|q\|_{L^{\infty}(D)},
\end{align}
which indicates that statement (1) of Assumption 1 holds.
In order to verify statement (2) of Assumption 1, we denote $u_{a}^{s} + \delta u = \mathcal{F}_{a}(q+\delta q)$.
By simple calculations, we deduce that $\delta u$ satisfies
\begin{align}\label{deltaUEq}
\left \{\begin{aligned}
& \nabla\cdot(s \nabla \delta u) + s_{1}s_{2}\kappa^{2}(1+q)\delta u = -\kappa^{2}\delta q(u^{\text{inc}} + s_{1}s_{2} u_{a}^{s}) \quad \text{in }D, \\
& \delta u = 0 \quad \text{on }\partial D.
\end{aligned}\right.
\end{align}
Now, denote $\mathcal{F}_{a}'(q)$ to be the Fr\'{e}chet derivative of $\mathcal{F}_{a}(q)$, we find that
\begin{align}\label{FDerQ}
\mathcal{F}_{a}'(q)\delta q = \mathcal{M}(\delta u),
\end{align}
where $\delta u$ is the solution of equations (\ref{deltaUEq}).
By using some basic estimates for equations (\ref{PMLBoundedHelEq}), we obtain
\begin{align}\label{FDerEst}
\|\mathcal{F}_{a}'(q)\delta q\|_{2} \leq \|\delta u\|_{L^{\infty(D)}} \leq C(1+\|q\|_{L^{\infty}(\Omega)})\|\delta q\|_{L^{\infty}(D)},
\end{align}
where $C$ depends on $\kappa$, $D$, $s_{1}$ and $s_{2}$.
Estimate (\ref{FDerEst}) ensures that statement (2) of Assumption 1 holds, and the proof is completed by employing Theorem \ref{wellPosedBaye}.
\end{proof}

\begin{remark}
From the proof of Theorem \ref{BayeTheoScatter}, we can see that Theorem \ref{SmallBall} holds true for
inverse medium scattering problem considered in this subsection. Hence, we can compute the MAP estimate by minimizing
functional defined in (\ref{MiniProForm}) with the forward operator defined in (\ref{DataSpecPro}).
\end{remark}

\begin{remark}
If we assume $\mu_{0}$ is a TV-Gaussian probability measure, similar results can be obtained.
The posterior probability measure is well-defined and the MAP estimate can be obtained by solving $\min_{q\in X}J(q)$
with $J$ defined in (\ref{MiniProFormTG}). Since there are no new ingredients, we omit the details.
\end{remark}

%%%% ------------------------------------------------------------------------------------------------------------------------------

\subsection{Learn parameters of complex Gaussian mixture distribution}\label{LearnSection}

How to estimate the parameters is one of the key steps for modeling noises by some complex Gaussian mixture distributions.
This key step consists two fundamental elements: learning examples and learning algorithms.

For the learning examples, they are the approximate errors $e := \mathcal{F}(q) - \mathcal{F}_{a}(q)$ that is
the difference of measured values for slow explicit forward solver and fast approximate forward solver.
In order to obtain this error, we need to know the unknown function $q$ which is impossible.
However, in practical problems, we usually know some prior knowledge of the unknown function $q$.
Relying on the prior knowledge, we can construct some probability measures to generate functions which we
believe to maintain similar statistical properties as the real unknown function $q$.
For this, we refer to a recent paper \cite{Iglesias2014IP}.
Since this procedure depends on specific application fields, we only provide details in Section \ref{SecNumer} for concrete numerical examples.

For the learning algorithms, expectation-maximization (EM) algorithm
is often employed in the machine learning community \cite{PR2006Book}.
Here, we need to notice that the variables are complex valued and the complex Gaussian distribution are used in our case.
This leads some differences to the classical real variable situation.

In order to provide a clear explanation, let us recall some basic relationships between complex Gaussian distributions and
real Gaussian distributions which are proved in \cite{Goodman1963Annals}.
Denote $e = (e_{1}, \ldots, e_{N_{d}})^{T}$ is a $N_{d}$-tuple of complex
Gaussian random variables. Let $\tau_{k} := \text{Re}(e_{k})$ and $\varsigma_{k} := \text{Imag}(e_{k})$
as the real and imaginary parts of $e_{k}$ with $k = 1,\ldots,N_{d}$, then define
\begin{align}\label{defTau}
\tau = (\tau_{1},\varsigma_{1},\ldots,\tau_{N_{d}},\varsigma_{N_{d}})
\end{align}
is $2N_{d}$-tuple of random variables.
From the basic theories of complex Gaussian distributions, we know that $\tau$ is $2N_{d}$-variate Gaussian distributed.
Denote the covariance matrix of $e$ by $\Sigma$ and the covariance matrix of $\tau$ by $\tilde{\Sigma}$.
As usual, we assume $\Sigma$ is a positive definite Hermitian matrix, then $\tilde{\Sigma}$ is a positive definite symmetric matrix
by Theorem 2.2 and Theorem 2.3 in \cite{Goodman1963Annals}.
In addition, we have the following lemma which is proved in \cite{Goodman1963Annals}.
\begin{lemma}\label{complexGauPro}
For complex Gaussian distributions, we have that the matrix $\Sigma$ is isomorphic to the matrix $2\tilde{\Sigma}$,
$e^{H}\Sigma e = \tau^{T}\tilde{\Sigma}\tau$ and $\text{det}(\Sigma)^{2} = \text{det}(\tilde{\Sigma})$.
\end{lemma}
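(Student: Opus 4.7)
The plan is to reduce everything to an explicit real-vs-complex bookkeeping via the standard embedding $\phi:\mathbb{C}\hookrightarrow M_{2}(\mathbb{R})$ that sends $a+ib\mapsto \left(\begin{smallmatrix} a & -b \\ b & a\end{smallmatrix}\right)$, and to chase the definition of covariance through the real/imaginary decomposition. First I would assume without loss of generality that $e$ has zero mean (replace $e$ by $e-Ee$), and write out $\Sigma_{jk}=A_{jk}+iB_{jk}$ together with $e_{k}=\tau_{k}+i\varsigma_{k}$. The governing structural fact for a proper complex Gaussian, used throughout Goodman's paper, is that the pseudo-covariance vanishes, i.e.\ $E[(e_{j})(e_{k})]=0$; combined with $E[e_{j}\overline{e_{k}}]=\Sigma_{jk}$ this forces the four real covariances to satisfy
\begin{align*}
E[\tau_{j}\tau_{k}]=E[\varsigma_{j}\varsigma_{k}]=\tfrac{1}{2}A_{jk},\qquad
E[\tau_{j}\varsigma_{k}]=-E[\varsigma_{j}\tau_{k}]=-\tfrac{1}{2}B_{jk}.
\end{align*}
Reading these as the $(j,k)$-th $2\times 2$ block of $\tilde\Sigma$ gives exactly $\tilde\Sigma_{jk}=\tfrac{1}{2}\phi(\Sigma_{jk})$, which is the first claim: $2\tilde\Sigma$ is (block-wise) the image of $\Sigma$ under the embedding $\phi$, hence $\Sigma\cong 2\tilde\Sigma$ in the only natural sense available.

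With the isomorphism in hand, the remaining two assertions become algebra in $M_{2n}(\mathbb{R})$. For the quadratic form I would introduce the $\mathbb{R}$-linear bijection $\Psi:\mathbb{C}^{N_{d}}\to\mathbb{R}^{2N_{d}}$, $\Psi(e)=\tau$ of (\ref{defTau}), check that $\Psi(Me)=\phi(M)\Psi(e)$ for any complex matrix $M$, and observe that $2\,\mathrm{Re}(\overline{e}^{\,T}w)=\tau^{T}\Psi(w)$ for any $w\in\mathbb{C}^{N_{d}}$. Applying this with $w=\Sigma e$ (so that $\Psi(w)=\phi(\Sigma)\tau=2\tilde\Sigma\tau$) and using that $e^{H}\Sigma e$ is real because $\Sigma$ is Hermitian yields $e^{H}\Sigma e=\tau^{T}\tilde\Sigma\tau$. (The same computation carried out with $\Sigma^{-1}$ in place of $\Sigma$ delivers the identity actually needed in the density; the inversion is harmless because $\phi$ is a ring homomorphism and $\phi(\Sigma)^{-1}=\phi(\Sigma^{-1})$.)

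For the determinant identity I would use the classical fact that $\det\phi(M)=|\det M|^{2}$ for any $M\in M_{n}(\mathbb{C})$; the slickest proof is to unitarily conjugate $M$ to upper-triangular form, note that $\phi$ intertwines unitary conjugation with a real orthogonal conjugation, and then compute the determinant of a block-triangular real matrix whose $2\times 2$ diagonal blocks have determinants $|\lambda_{k}|^{2}$. Specialised to the Hermitian $\Sigma$, whose eigenvalues are real, this gives $\det\phi(\Sigma)=\det(\Sigma)^{2}$, and since $\phi(\Sigma)=2\tilde\Sigma$ we recover $\det(\Sigma)^{2}=\det(\tilde\Sigma)$ up to the normalisation convention Goodman adopts (absorbing the $2^{2N_{d}}$ factor into the definition of $\tilde\Sigma$, which is precisely why the $\tfrac{1}{2}$ appears above).

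The only genuinely delicate point is the compatibility of conventions: one must decide once and for all whether the real covariance is taken as $E[\tau\tau^{T}]$ or as $\tfrac{1}{2}E[\tau\tau^{T}]$, and whether the complex Gaussian density carries $\pi^{N_{d}}$ or $(2\pi)^{N_{d}}$ in the normaliser. All three conclusions are rigid under these choices, but the factor of $2$ linking $\Sigma$ and $\tilde\Sigma$ (and the resulting $2^{2N_{d}}$ in the determinant comparison) shifts accordingly; I expect this bookkeeping to be the main place a careful proof can slip, so I would fix conventions explicitly at the outset by matching the density $\mathcal{N}_{c}(\,\cdot\,|\zeta,\Sigma)$ used in the paper against its real-variate image under $\Psi$, and then let the three identities fall out of the computation above.
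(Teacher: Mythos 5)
The paper does not actually prove this lemma: it is quoted from Goodman's 1963 paper with a bare citation, so your attempt is supplying an argument the authors omit. Your overall strategy --- decompose into real and imaginary parts, use the vanishing pseudo-covariance $E[e_{j}e_{k}]=0$ to identify the $2\times 2$ blocks of $\tilde{\Sigma}$, pass through the embedding $\phi(a+ib)=\left(\begin{smallmatrix}a&-b\\ b&a\end{smallmatrix}\right)$, and invoke $\det\phi(M)=|\det M|^{2}$ --- is the right one and is essentially Goodman's.

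There is, however, a concrete error in the middle step. The identity you use for the quadratic form, $2\,\mathrm{Re}(\bar{e}^{T}w)=\tau^{T}\Psi(w)$, is false: already for $N_{d}=1$ with $e=w=1$ one gets $\tau^{T}\Psi(w)=1$ but $2\,\mathrm{Re}(\bar{e}^{T}w)=2$. The correct identity is $\mathrm{Re}(e^{H}w)=\tau^{T}\Psi(w)$, and combined with your own (correct) first computation $\tilde{\Sigma}=\frac{1}{2}\phi(\Sigma)$ it yields $e^{H}\Sigma e=\tau^{T}\phi(\Sigma)\tau=2\,\tau^{T}\tilde{\Sigma}\tau$ and $(\det\Sigma)^{2}=\det\phi(\Sigma)=2^{2N_{d}}\det\tilde{\Sigma}$. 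In other words, once $\tilde{\Sigma}$ is fixed to be the covariance of $\tau$ (which is how the paper defines it just before the lemma), the first assertion holds but the second and third acquire factors $2$ and $2^{2N_{d}}$; the three assertions cannot all hold verbatim under a single convention. Goodman's clean statements ($e^{H}Ce=\tau^{T}R\tau$ and $\det R=|\det C|^{2}$) concern a complex matrix $C$ and its isomorphic image $R=\phi(C)$, not the covariance $\frac{1}{2}\phi(\Sigma)$ of $\tau$. Your closing paragraph correctly senses that the bookkeeping is the delicate point, but the resolution is not to absorb the discrepancy into an auxiliary identity (which is what the spurious factor $2$ in $2\,\mathrm{Re}(\bar{e}^{T}w)=\tau^{T}\Psi(w)$ silently does); it is either to restate the second and third conclusions with the factors $2$ and $2^{2N_{d}}$, or to read $\tilde{\Sigma}$ in them as $\phi(\Sigma)=2\,\mathrm{Cov}(\tau)$. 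For the use made of the lemma downstream (the stationarity condition in the complex EM step, where the equation is set to zero, and the matching of the complex and real densities) these constants are harmless, but a proof should get them right.
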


Let $N_{s} \in \mathbb{N}^{+}$ stands for the number of learning examples.
Let $e_{n} = (e_{1}^{n}, \ldots, e_{N_{d}}^{n})^{T}$ with $n = 1,\ldots,N_{s}$ represent
$N_{s}$ learning examples. Then, for some fixed $K \in \mathbb{N}^{+}$,
we need to solve the following optimization problem to obtain estimations of parameters
\begin{align}
\min_{\{\pi_{k}, \zeta_{k}, \Sigma_{k}\}_{k=1}^{K}} J_{G}(\{\pi_{k}, \zeta_{k}, \Sigma_{k}\}_{k =1}^{K}),
\end{align}
where
\begin{align}
J_{G}(\{\pi_{k}, \zeta_{k}, \Sigma_{k}\}_{k =1}^{K}) := \sum_{n = 1}^{N_{s}} \ln \Bigg\{ \sum_{k = 1}^{K} \pi_{k}
\mathcal{N}_{c}(e_{n} \, | \, \zeta_{k},\Sigma_{k}) \Bigg\}.
\end{align}

In the following, we only show two different parts compared with the real variable Gaussian case.

\textbf{Estimation of means}: Setting the derivatives of $J_{G}(\{\pi_{k}, \zeta_{k}, \Sigma_{k}\}_{k =1}^{K})$ with respect to $\zeta_{k}$
of the complex Gaussian components to zero and using Lemma \ref{complexGauPro}, we obtain
\begin{align}
0 = -\sum_{n=1}^{N_{s}}\frac{\pi_{k}\mathcal{N}_{c}(e_{n}\,|\,\zeta_{k},\Sigma_{k})}
{\sum_{j = 1}^{K}\pi_{j}\mathcal{N}_{c}(e_{j}\,|\,\zeta_{j},\Sigma_{j})}\tilde{\Sigma}_{k}^{-1}(\tau_{n} - \tilde{\zeta}_{k}),
\end{align}
where $\tau_{n}$ defined as in (\ref{defTau}) with $e$ replaced by $e_{n}$,
$\tilde{\zeta}_{k}$ also defined as in (\ref{defTau}) with $e$ replaced by $\zeta_{k}$ and $\tilde{\Sigma}_{k}$
is the covariance matrix corresponding to $\Sigma_{k}$.
Hence, by some simple simplification, we find that
\begin{align}
\zeta_{k} = \frac{1}{\tilde{N}_{k}}\sum_{n=1}^{N_{s}}\gamma_{nk}e_{n},
\end{align}
where
\begin{align}\label{defineTilN}
\tilde{N}_{k} = \sum_{n=1}^{N_{s}}\gamma_{nk},  \quad
\gamma_{nk} = \frac{\pi_{k}\mathcal{N}_{c}(e_{n}\,|\,\zeta_{k},\Sigma_{k})}
{\sum_{j = 1}^{K}\pi_{j}\mathcal{N}_{c}(e_{j}\,|\,\zeta_{j},\Sigma_{j})}.
\end{align}
In the above formula, $\tilde{N}_{k}$ usually interpret as the effective number of points assigned to cluster $k$
and $\gamma_{nk}$ usually is a variable depend on latent variables \cite{PR2006Book}.

\textbf{Estimation of covariances}: For the covariances, we need to use latent variables to provide the
following complete-data log likelihood function as formula (9.40) shown in \cite{PR2006Book}
\begin{align}
\sum_{n = 1}^{N_{s}}\sum_{k = 1}^{K}\gamma_{nk}\Big\{ \ln\pi_{k} + \ln\mathcal{N}_{c}(e_{n} \, | \, \zeta_{k}, \Sigma_{k}) \Big\}.
\end{align}
Now, for $k = 1,\dots,K$, we prove that
\begin{align}
\Sigma_{k} := \frac{1}{\tilde{N}_{k}}\sum_{n = 1}^{N_{s}}\gamma_{nk}(e_{n} - \zeta_{k})(e_{n} - \zeta_{k})^{H}
\end{align}
solves the following maximization problem
\begin{align}\label{ZuidaWen}
\max_{\{\Sigma_{k}\}_{k=1}^{K}}\Bigg\{\sum_{n = 1}^{N_{s}}\sum_{k = 1}^{K}\gamma_{nk}
\Big( \ln\pi_{k} + \ln\mathcal{N}_{c}(e_{n} \, | \, \zeta_{k}, \Sigma_{k}) \Big)\Bigg\}.
\end{align}
\begin{proof}
Denote
\begin{align}
L = \sum_{n = 1}^{N_{s}}\sum_{k = 1}^{K}\gamma_{nk}
\Big( \ln\pi_{k} + \ln\mathcal{N}_{c}(e_{n} \, | \, \zeta_{k}, \Sigma_{k}) \Big).
\end{align}
Let
\begin{align}
B_{k} := \frac{1}{\tilde{N}_{k}}\sum_{n = 1}^{N_{s}}\gamma_{nk}(e_{n} - \zeta_{k})(e_{n} - \zeta_{k})^{H}
\end{align}
and notice that
\begin{align*}
\sum_{n=1}^{N_{s}}\sum_{k=1}^{K}\gamma_{nk}(e_{n}-\zeta_{k})^{H}\Sigma_{k}^{-1}(e_{n}-\zeta_{k})
& = \sum_{n=1}^{N_{s}}\sum_{k=1}^{K}\gamma_{nk}\text{tr}\Big( \Sigma_{k}^{-1}(e_{n}-\zeta_{k})(e_{n}-\zeta_{k})^{H} \Big) \\
& = \sum_{k=1}^{K}\text{tr}\Big( \Sigma_{k}^{-1}\sum_{n=1}^{N_{s}}\gamma_{nk}(e_{n}-\zeta_{k})(e_{n}-\zeta_{k})^{H} \Big) \\
& = \sum_{k=1}^{K}\tilde{N}_{k}\text{tr}\Big( \Sigma_{k}^{-1}B_{k} \Big),
\end{align*}
where $\tilde{N}_{k}$ defined as in (\ref{defineTilN}).
Then, using the explicit form of density function, we obtain
\begin{align}\label{LDEF}
L = - \sum_{k = 1}^{K}\tilde{N}_{k}\ln\text{det}(\Sigma_{k}) - \sum_{k=1}^{K}\tilde{N}_{k}\text{tr}(\Sigma_{k}^{-1}B_{k})
- N_{d}\ln\pi + \sum_{k=1}^{K}\tilde{N}_{k}\ln\pi_{k}.
\end{align}
Define $p(\xi,\Sigma):= \frac{1}{\pi^{N_{d}}\text{det}(\Sigma)}\exp\left( -\xi^{H}\Sigma^{-1}\xi \right)$, then we have
\begin{align}\label{JDEF}
\begin{split}
J & = \sum_{k=1}^{K} \tilde{N_{k}} \int_{\xi}p(\xi,\Sigma_{k}^{-1})\ln\Big( p(\xi,B_{k}^{-1})/p(\xi,\Sigma_{k}^{-1}) \Big) d\xi \\
& = \int_{\xi} \Bigg\{ \left( \ln\text{det}(B_{k}) - \xi^{H}B_{k}\xi \right)p(\xi,\Sigma_{k}^{-1}) \\
& \quad\quad\quad\quad\quad\quad\quad\quad\quad\quad
- \left( \ln\text{det}(\Sigma_{k}) - \xi^{H}\Sigma_{k}\xi \right)p(\xi,\Sigma_{k}^{-1}) \Bigg\}d\xi \\
& = \sum_{k=1}^{K}\tilde{N}_{k}\ln\text{det}(B_{k}) + \sum_{k=1}^{K}\tilde{N}_{k}\text{tr}(I) \\
& \quad\quad\quad\quad\quad\quad\quad\quad\quad
- \sum_{k=1}^{K}\tilde{N}_{k}\text{tr}(\Sigma_{k}^{-1}B_{k}) - \sum_{k=1}^{K}\tilde{N}_{k}\ln\text{det}(\Sigma_{k}),
\end{split}
\end{align}
where Corollary 4.1 in \cite{Goodman1963Annals} has been used for the last equality.
On comparing the final result of (\ref{LDEF}) with (\ref{JDEF}) one observes that any series Hermitian positive definite matrixes
$\{\Sigma_{k}\}_{k=1}^{K}$ that maximize $L$ maximize $J$ and conversely.
Now, $\ln u \leq u-1$ with equality holding if and only if $u = 1$.
Thus
\begin{align}\label{Jleq1}
\begin{split}
J & = \sum_{k=1}^{K} \tilde{N_{k}} \int_{\xi}p(\xi,\Sigma_{k}^{-1})\ln\Big( p(\xi,B_{k}^{-1})/p(\xi,\Sigma_{k}^{-1}) \Big) d\xi \\
& \leq \sum_{k=1}^{K} \tilde{N_{k}} \int_{\xi} p(\xi,\Sigma_{k}^{-1})\Big( p(\xi,B_{k}^{-1})/p(\xi,\Sigma_{k}^{-1}) - 1 \Big) d\xi = 0.
\end{split}
\end{align}
If and only if $p(\xi,\Sigma_{k}) = p(\xi,B_{k})$ with $k=1,\ldots,K$, equality in (\ref{Jleq1}) holds true.
Hence, $\Sigma_{k} = B_{k}\, (k=1,\ldots,K)$ solves problem (\ref{ZuidaWen}).
\end{proof}

With these preparations, we can easily construct EM algorithm following the line of reasoning shown in Chapter 9 of \cite{PR2006Book}.
For concisely, the details are omitted and we provide the EM algorithm in Algorithm \ref{algComplexEM}.
\begin{algorithm}
\caption{Complex EM algorithm}
\label{algComplexEM}
\begin{algorithmic}
\STATE {\textbf{Step 1}: For a series of samples $e_{n} \in \mathbb{C}^{N_{d}} \, (n = 1,\ldots,N_{s})$,
initialize the means $\zeta_{k}$, covariances $\Sigma_{k}$ and mixing coefficients $\pi_{k}$, and evaluate
the initial value of the $\ln$ likelihood.}
\STATE {\textbf{Step 2 (E step)}: Evaluate the responsibilities using the current parameter values
\begin{align*}
\gamma_{nk} = \frac{\pi_{k}\mathcal{N}_{c}(e_{n} \, | \, \zeta_{k},\Sigma_{k})
}{\sum_{j = 1}^{K} \pi_{j}\mathcal{N}_{c}(e_{n} \, | \, \zeta_{j},\Sigma_{j}).
}
\end{align*}
}
\STATE {\textbf{Step 3 (M step)}: Re-estimate the parameters using the current responsibilities
\begin{align*}
\tilde{N}_{k} = \sum_{n=1}^{N_{s}}\gamma_{nk},  \quad \pi_{k}^{\text{new}} = \frac{\tilde{N}_{k}}{N_{s}}, \quad
\zeta_{k}^{\text{new}} = \frac{1}{\tilde{N}_{k}} \sum_{n=1}^{N_{s}}\gamma_{nk}e_{n},
\end{align*}
\begin{align*}
\Sigma_{k}^{\text{new}} = \frac{1}{\tilde{N}_{k}} \sum_{n=1}^{N_{s}}\gamma_{nk}(e_{n}-\zeta_{k}^{\text{new}})
(e_{n}-\zeta_{k}^{\text{new}})^{H}
\end{align*}
}
\STATE {\textbf{Step 4}: Evaluate the $\ln$ likelihood
\begin{align*}
\sum_{n=1}^{N_{s}}\ln\Bigg\{ \sum_{k=1}^{K}\pi_{k}\mathcal{N}_{c}(e_{n} \, | \, \zeta_{k},\Sigma_{k}) \Bigg\}
\end{align*}
and check for convergence of either the parameters or the $\ln$ likelihood.
If the convergence criterion is not satisfied return to Step 2.
}
\end{algorithmic}
\end{algorithm}

\begin{remark}
In Algorithm \ref{algComplexEM}, if the parameters satisfy $N_{d} < N_{s}$, we can usually obtain nonsingular matrixes $\{\Sigma_{k}\}_{k=1}^{K}$.
However, in our case, we can not generate so many learning examples $N_{s}$ and the number of measuring points $N_{d}$ is usually very large
for real world applications.
Hence, we will meet the situation $N_{d} > N_{s}$ which makes $\{\Sigma_{k}\}_{k=1}^{K}$ to be a series of singular matrixes.
In order to solve this problem, we adopt a simple strategy that is replace the estimation of $\Sigma_{k}$ in Step 3 by
the following formula
\begin{align}\label{MStepReg}
\Sigma_{k}^{\text{new}} = \frac{1}{\tilde{N}_{k}} \sum_{n=1}^{N_{s}}\gamma_{nk}(e_{n}-\zeta_{k}^{\text{new}})
(e_{n}-\zeta_{k}^{\text{new}})^{H} + \delta I,
\end{align}
where $\delta$ is a small positive number named as the regularization parameter.
\end{remark}

%%%% ------------------------------------------------------------------------------------------------------------------------------

\subsection{Adjoint state approach with model error compensation}

By Algorithm \ref{algComplexEM}, we obtain the estimated mixing coefficients, mean values and covariance matrixes.
From the statements shown in Section \ref{BayeTheoSection} and Subsection \ref{WellSubsec}, it is obviously that we need to solve
optimization problems as follows
\begin{align}
\min_{q\in L^{\infty}(\Omega)} \Big\{ - \Phi(q;d) + \mathcal{R}(q) \Big\},
\end{align}
where
\begin{align}
- \Phi(q;d)\! = \!  - \ln\Bigg\{ \sum_{k=1}^{K}\pi_{k}\frac{1}{\pi^{N_{d}}\det(\Sigma_{k}+\nu I)}
\exp\Big( -\frac{1}{2}\Big\| d-\mathcal{F}_{a}(q)-\zeta_{k} \Big\|_{\Sigma_{k}+\nu I}^{2} \Big) \Bigg\}, \\
\mathcal{R}(q) = \frac{1}{2}\|A^{s/2}q\|_{L^{2}(\Omega)}^{2}\, \quad \text{or} \quad
\mathcal{R}(q) = \lambda \|q\|_{\text{TV}} + \frac{1}{2}\|A^{s/2}q\|_{L^{2}(\Omega)}^{2}.  \label{DefFunR}
\end{align}
Different form of functional $\mathcal{R}$ comes from different assumptions of the prior probability measures:
Gaussian probability measure or TV-Gaussian probability measure.
For the multi-frequency approach of inverse medium scattering problem, the forward operator in each optimization problem
is related to $\kappa$. So we rewrite $\mathcal{F}_{a}(q)$ and $\Phi(q;d)$ as $\mathcal{F}_{a}(q,\kappa)$ and $\Phi(q,\kappa;d)$,
which emphasize the dependence of $\kappa$.
We have a series of wavenumbers $0 < \kappa_{1} < \kappa_{2} < \cdots \kappa_{N_{w}} < \infty$,
and we actually need to solve a series optimization problems
\begin{align} \label{opt1}
\min_{q\in L^{\infty}(\Omega)} \Big\{ - \Phi(q,\kappa_{i};d) + \mathcal{R}(q) \Big\}
\end{align}
with $i$ from $1$ to $N_{w}$ and the solution of the previous optimization problem is the initial data for the later optimization problem.

Denote $F(q) = - \Phi(q,\kappa_{i};d)$. To minimize
the cost functional by a gradient method, it is required to compute Fr\'{e}chet derivative of functionals $F$ and $\mathcal{R}$.
For functional $\mathcal{R}$ with form shown in (\ref{DefFunR}), we can obtain the Fr\'{e}chet derivatives as follows
\begin{align}\label{DerR1}
\mathcal{R}'(q) = A^{s}q, \quad \text{or} \quad \mathcal{R}'(q) = A^{s}q + 2\lambda \nabla\cdot\left( \frac{\nabla q}{\sqrt{|\nabla q|^{2} + \delta}} \right),
\end{align}
where we used the following modified version of $\mathcal{R}$
\begin{align}
\mathcal{R}(q) = \lambda\int_{\Omega}\sqrt{|\nabla q|^{2}+\delta} + \frac{1}{2}\|A^{s/2}q\|_{L^{2}(\Omega)}^{2}
\end{align}
for the TV-Gaussian prior case and $\delta$ is a small smoothing parameter avoiding zero denominator in (\ref{DerR1}).

Next, we consider the functional $F$ with $\mathcal{F}_{a}$ is the forward operator related to problem (\ref{PMLBoundedHelEq}).
A simple calculation yields the derivative of $F$ at $q$;
\begin{align}\label{zuihou0}
F'(q)\delta q = \text{Re} \Big( \mathcal{M}(\delta u), \sum_{k = 1}^{K} \gamma_{k} (\Sigma_{k}+\nu I)^{-1}(d-\mathcal{F}_{a}(q,\kappa_{i})-\zeta_{k}) \Big),
\end{align}
where $\delta u$ satisfy
\begin{align}\label{deltaUEq2}
\left \{\begin{aligned}
& \nabla\cdot(s \nabla \delta u) + s_{1}s_{2}\kappa_{i}^{2}(1+q)\delta u = -\kappa^{2}\delta q(u^{\text{inc}} + s_{1}s_{2}u_{a}^{s}) \quad \text{in }D, \\
& \delta u = 0 \quad \text{on }\partial D,
\end{aligned}\right.
\end{align}
and
\begin{align*}
\gamma_{k} = \frac{\pi_{k}\mathcal{N}_{c}(d-\mathcal{F}_{a}(q) \, | \, \zeta_{k},\Sigma_{k}+\nu I)
}{\sum_{j = 1}^{K} \pi_{j}\mathcal{N}_{c}(d-\mathcal{F}_{a}(q) \, | \, \zeta_{j},\Sigma_{j}+\nu I).
}
\end{align*}
To compute the Fr\'{e}chet derivative, we introduce the adjoint system:
\begin{align}\label{AdjSystem}
\left \{\begin{aligned}
& \nabla\cdot(\bar{s}\nabla v) + \bar{s}_{1}\bar{s}_{2}\kappa_{i}^{2}(1+q) v =
- \kappa_{i}^{2} \sum_{j = 1}^{N_{d}}\delta(x-x_{j})\rho_{j} \quad \text{in }D, \\
& v = 0 \quad \text{on }D,
\end{aligned}\right.
\end{align}
where $\rho_{j} \, (j=1,\ldots,N_{d})$ denote the $j$th component of
$\sum_{k = 1}^{K} \gamma_{k} (\Sigma_{k}+\nu I)^{-1}(d-\mathcal{F}_{a}(q,\kappa_{i})-\zeta_{k}) \in \mathbb{C}^{N_{d}}$.
Multiplying equation (\ref{deltaUEq2}) with the complex conjugate of $v$ on both sides and integrating over $D$ yields
\begin{align*}
\int_{D}\nabla\cdot(s \nabla \delta u)\bar{v} + s_{1}s_{2}\kappa_{i}^{2}(1+q)\delta u \bar{v}
= - \int_{D}\kappa^{2}\delta q(u^{\text{inc}} + s_{1}s_{2}u_{a}^{s})\bar{v}.
\end{align*}
By integration by parts formula, we obtain
\begin{align*}
\int_{D}\delta u \Big( \nabla\cdot(s \nabla \bar{v}) + s_{1}s_{2}\kappa_{i}^{2}(1+q)\bar{v} \Big)
= - \kappa_{i}^{2}\int_{D}\delta q (u^{\text{inc}} + s_{1}s_{2}u_{a}^{s})\bar{v}.
\end{align*}
Taking complex conjugate of equation (\ref{AdjSystem}) and plugging into the above equation yields
\begin{align*}
-\kappa_{i}^{2}\int_{D}\delta u \sum_{j = 1}^{N_{d}}\delta(x-x_{j})\bar{\rho}_{j}
= - \kappa_{i}^{2}\int_{D}\delta q (u^{\text{inc}} + s_{1}s_{2}u_{a}^{s})\bar{v},
\end{align*}
which implies
\begin{align}\label{zuihou1}
\Big( \mathcal{M}(\delta u),  \sum_{k = 1}^{K} \gamma_{k} \Sigma_{k}^{-1}(d-\mathcal{F}_{a}(q,\kappa_{i})-\zeta_{k}) \Big)
= \int_{D}\delta q (u^{\text{inc}} + s_{1}s_{2}u_{a}^{s})\bar{v}.
\end{align}
Considering both (\ref{zuihou0}) and (\ref{zuihou1}), we find that
\begin{align*}
F'(q)\delta q = \text{Re}\int_{D}\delta q (u^{\text{inc}} + s_{1}s_{2}u_{a}^{s})\bar{v},
\end{align*}
which gives the Fr\'{e}chet derivative as follow
\begin{align}\label{Fdd1}
F'(q) = \text{Re}\big( (\bar{u}^{\text{inc}} + \bar{s}_{1}\bar{s}_{2}\bar{u}_{a}^{s})v \big).
\end{align}

With these preparations, it is enough to construct Gaussian mixture recursive linearization method (GMRLM) which is shown in Algorithm \ref{alg23}.
Notice that for the recursive linearization method (RLM) shown in \cite{Bao2015TopicReview},
only one iteration of the gradient descent method for each fixed wavenumber can provide an acceptable recovery function.
So we only iterative once for each fixed wavenumber.
\begin{algorithm}
\caption{Gaussian mixture recursive linearization method (GMRLM)}
\label{alg23}
\begin{algorithmic}
\REQUIRE {Initialize parameters: $\sigma_{0}$, $d_{1}$, $d_{2}$, $p$, $\{\zeta_{k}\}_{k=1}^{K}$, $\{\Sigma_{k}\}_{k=1}^{K}$, $\{\pi_{k}\}_{k=1}^{K}$, $q$,
wavenumbers $(\kappa_{1},\ldots, \kappa_{N_{w}})$ and
incident angles $(\textbf{d}_{1},\ldots,\textbf{d}_{N_{m}})$.}
\STATE {\textbf{Iteration}:
for $i = 1,2,\ldots, N_{w}$ \\
\hspace*{2.2 cm} for $j = 1,2,\ldots, N_{m}$ \\
\hspace*{2.7 cm} solve one forward problem (\ref{PMLBoundedHelEq}) with $\kappa = \kappa_{i}$ and $\textbf{d} = \textbf{d}_{j}$; \\
\hspace*{2.7 cm} solve one adjoint problem (\ref{AdjSystem}) with $\kappa = \kappa_{i}$ and $\textbf{d} = \textbf{d}_{j}$; \\
\hspace*{2.7 cm} compute the Fr\'{e}chet derivative by formulas (\ref{DerR1}) and (\ref{Fdd1}); \\
\hspace*{2.7 cm} update the scatterer function; \\
\hspace*{2.2 cm} end for  \\
\hspace*{1.7 cm} end for \\
}
\ENSURE {Final estimation of $q$.}
\end{algorithmic}
\end{algorithm}

%%%% ------------------------------------------------------------------------------------------------------------------------------

\section{Numerical examples}\label{SecNumer}

In this section, we provide two numerical examples in two dimensions to illustrate the effectiveness of the proposed method.
In the following, we assume that $\Omega = \{x\in\mathbb{R}^{2} \, : \, \|x\|_{2} \leq 1\}$ with $\Omega \subset D$
where $D$ is the PML domain with $d_{1} = d_{2} = 0.15$, $p = 2.5$ and $\sigma_{0} = 1.5$.
For the forward solver, finite element method (FEM) has been employed and
the scattering data are obtained by numerical solution of the forward scattering problem with adaptive mesh technique.
For the following two examples, we choose $N_{w} = 20$ and $\textbf{d}_{j} \, (j = 1,\ldots,N_{w})$
are equally distributed around $\partial D$. Equally spaced wavenumbers are used, starting from the lowest wavenumber $\kappa_{\text{min}} = \pi$
and ending at the highest wavenumber $\kappa_{\text{max}} = 10\pi$.
Denote by $\Delta\kappa = (\kappa_{\text{max}} - \kappa_{\text{min}})/9 = \pi$ the step size of the wavenumber;
then the ten equally spaced wavenumbers are $\kappa_{j} = j\Delta\kappa$, $j = 1,\ldots,10$.
We set $400$ receivers that equally spaced along the boundary of $\Omega$ as shown in Figure \ref{illuFig2}.
For the initial guess of the unknown function $q$, there are numerous strategies, i.e.,
methods based on Born approximation \cite{Bao2015TopicReview, Bleistein2001Book}.
Since the main point here is not on the initial gauss, we just set the initial $q$ to be a function always equal to zero for simplicity.

In order to show the stability of the proposed method, some relative random noise is added to the data, i.e.,
\begin{align}
u^{s}|_{\partial \Omega} := (1+\sigma \text{rand})u^{s}|_{\partial_{D}}.
\end{align}
Here, rand gives uniformly distributed random numbers in $[-1,1]$ and $\sigma$ is a noise level parameter taken to be $0.02$
in our numerical experiments. Define the relative error by
\begin{align}
\text{Relative Error} = \frac{\|q-\tilde{q}\|_{L^{2}(\Omega)}}{\|q\|_{L^{2}(\Omega)}},
\end{align}
where $\tilde{q}$ is the reconstructed scatterer and $q$ is the true scatterer.

\textbf{Example 1}: For the first example, let
\begin{align*}
\tilde{q}(x,y) = 0.3 (1-x)^{2}e^{-x^{2}-(y+1)^{2}} - (0.2x - x^{3} - y^{5})e^{-x^{2}-y^{2}} - 0.03 e^{-(x+1)^{2} - y^{2}}
\end{align*}
and reconstruct a scatterer defined by
\begin{align*}
q(x,y) = \tilde{q}(3x,3y)
\end{align*}
inside the unit square $\{x\in\mathbb{R}^{2} \, : \, \|x\|_{2} < 1\}$.

\begin{figure}[htbp]
  \centering
  % Requires \usepackage{graphicx}
  \includegraphics[width=1\textwidth]{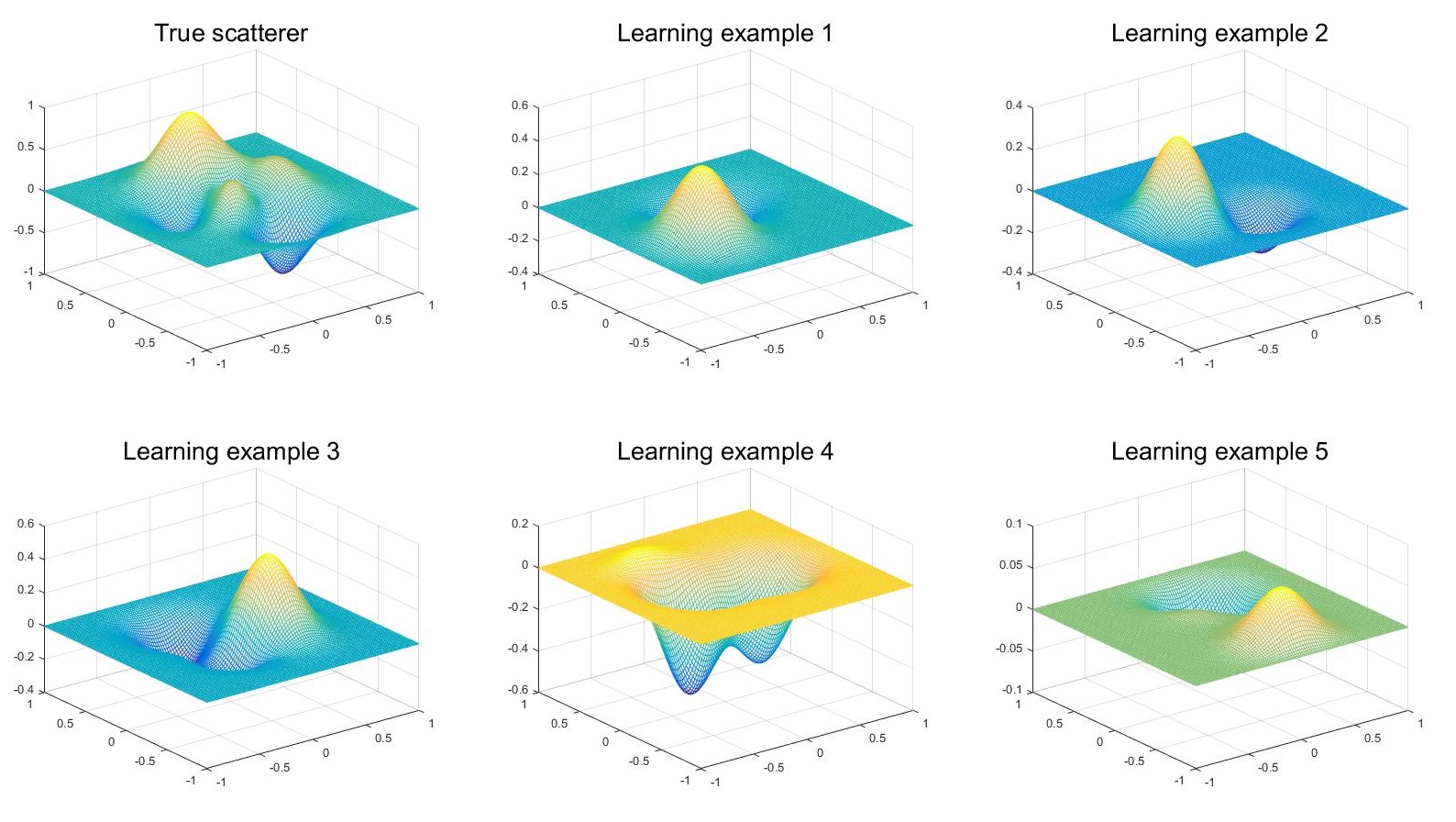}\\
  \caption{True scatterer and five typical learning examples}\label{TrueLearnFig1}
\end{figure}

Denote $U[b_{1},b_{2}]$ to be a uniform distribution with minimum value $b_{1}$ and maximum value $b_{2}$.
Now, we assume that some prior knowledge of this function $q$ have been known.
According to the prior knowledge, we generate $200$ learning examples according to the following function
\begin{align}
q_{e}(x,y) := \sum_{k = 1}^{3}(1-x^{2})^{a_{k}^{1}}(1-y^{2})^{a_{k}^{2}} a_{k}^{3} \exp\bigg(-a_{k}^{4}(x - a_{k}^{5})^{2}
- a_{k}^{6}(y - a_{k}^{7})^{2}\bigg),
\end{align}
where
\begin{align*}
& a_{k}^{1}, a_{k}^{2} \sim U[1,3], \quad a_{k}^{3} \sim U[-1,1], \\
& a_{k}^{4}, a_{k}^{6} \sim U[8,10], \quad a_{k}^{5}, a_{k}^{7} \sim U[-0.8,0.8].
\end{align*}

In order to provide an intuitional sense, we show the true scatterer and several learning examples in Figure \ref{TrueLearnFig1}.
We use $409780$ elements to obtain accurate solutions which we recognized as $\mathcal{S}(q)u^{\text{inc}}$.
To test our approach, $16204$ elements will be used to obtain $\mathcal{S}_{a}(q)u^{\text{inc}}$.

Learning algorithm with $K = 4$ proposed in Subsection \ref{LearnSection} has been used to learn the statistical properties of differences
$e_{n}^{i} := \mathcal{F}(q_{n},\kappa_{i}) - \mathcal{F}_{a}(q_{n},\kappa_{i})$ with $\kappa_{i} = i\cdot \pi \, (i = 1,\ldots 10)$
and $q_{n} \, (n = 1,\ldots 200)$ stands for the learning examples.
Concerning the regularizing term, we take $A = 0.01 \Delta$, $s = 1.5$ and $\lambda = 0$, which can be computed by Fourier transform.
Since regularization is not the main point of our paper, we will not discuss the strategies of choosing $A$ in details.

\begin{figure}[htbp]
  \centering
  % Requires \usepackage{graphicx}
  \includegraphics[width=0.5\textwidth]{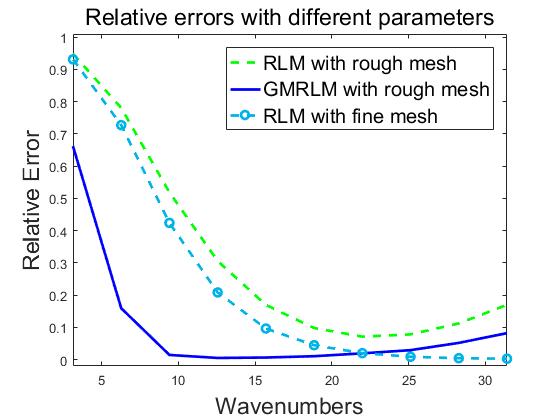}\\
  \caption{Relative errors with different parameters:
  green dotted line are relative errors obtained by using the RLM with 16204 elements;
  cyan dotted line with circles are relative errors obtained by using the RLM with 183198 elements;
  blue solid line are relative errors obtained by using the GMRLM with 16204 elements.}\label{RelaErrEx1}
\end{figure}
\begin{table}[htbp]\label{table1}
\begin{center}
\caption{Comparison of the RLM and the GMRLM with different parameters}
\begin{tabular}{c|c|c|c}
  \Xhline{1pt}
  % after \\: \hline or \cline{col1-col2} \cline{col3-col4} ...
  Algorithm  & Element Number & Wavenumber & Relative Error  \\
  \hline
  RLM        & 16204          & $7\pi$     &  7.10\%    \\
  \hline
  RLM        & 183198         & $10\pi$    &  0.26\%  \\
  \hline
  GMRLM      & 16204          & $4\pi$     &  0.49\%   \\
  \hline
  RLM        & 16204          & $4\pi$     &  30.58\%   \\
  \hline
  RLM        & 183198         & $4\pi$     &  20.82\%    \\
  \hline
  RLM        & 183198         & $9\pi$     &  0.42\%    \\
  \Xhline{1pt}
\end{tabular}
\end{center}
\end{table}

Relative errors of RLM with small element number, RLM with large element number and GMRLM
with small element number have been shown in Figure \ref{RelaErrEx1}, which illustrate the effectiveness of the proposed method.
For the case of small element number, the RLM diverges when $\kappa\approx 7\pi$.
The reason is that large number of elements are needed to ensure the convergence of finite element methods for
Helmholtz equations with high wavenumber. Our error compensation method can not eliminate such errors, so
it is also diverges when $\kappa\approx 7\pi$. However, when $\kappa \approx 4\pi$, our method provides a recovered function with relative error
comparable to the result obtained by the RLM with more than eleven times of elements and $\kappa \approx 9\pi$.
Hence, by learning process, the GMRLM can give an acceptable recovered function much faster than the traditional RLM.

\begin{figure}[htbp]
  \centering
  % Requires \usepackage{graphicx}
  \includegraphics[width=1\textwidth]{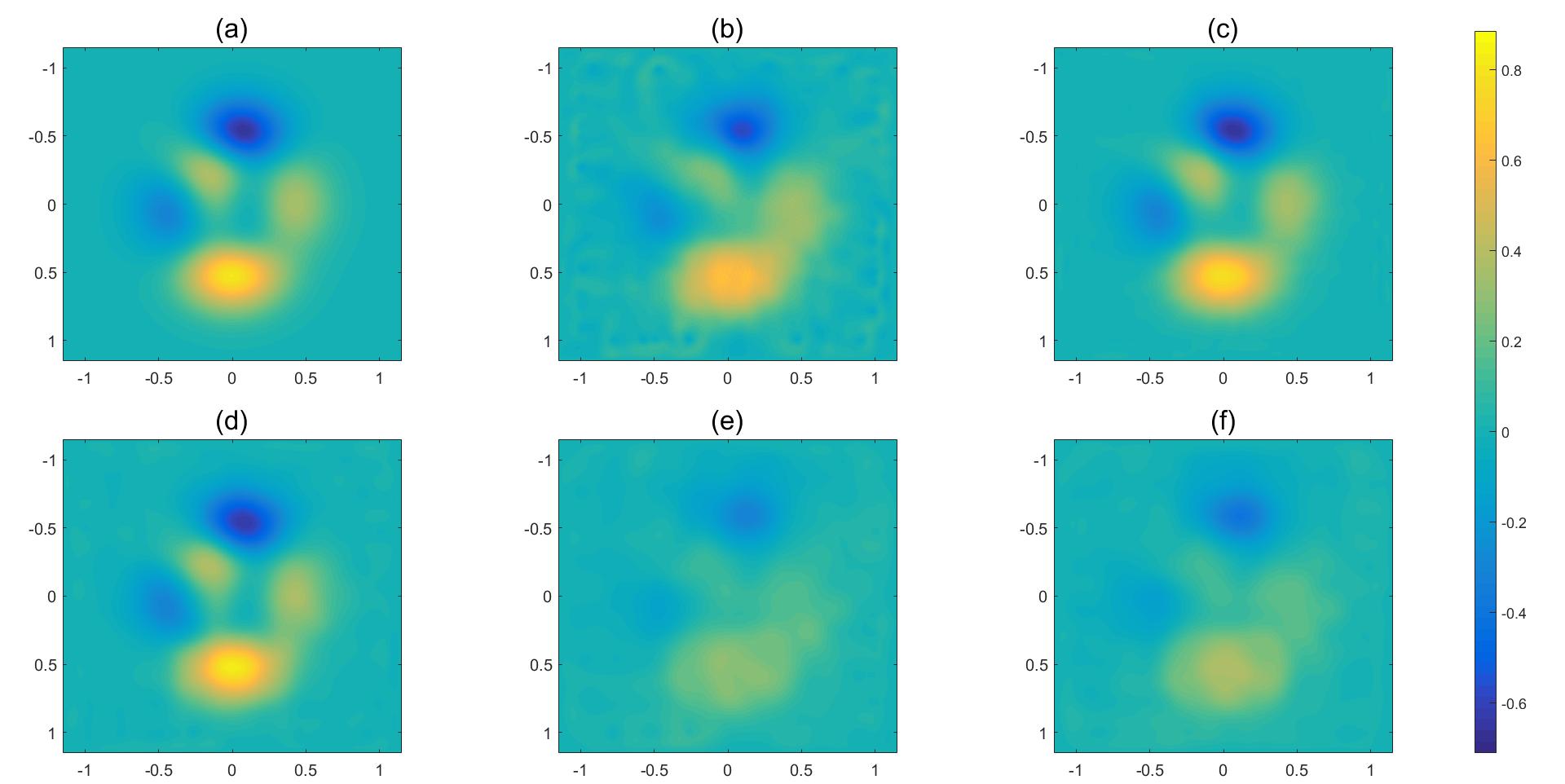}\\
  \caption{Recovered functions with different parameters. (a): true function;
  (b): minimum relative error estimate for the RLM with 16204 elements and the wavenumber computed to $7\pi$;
  (c): minimum relative error estimate for the RLM with 183198 elements and the wavenumber computed to $10\pi$;
  (d): minimum relative error estimate for the GMRLM with 16204 elements and the wavenumber computed to $4\pi$;
  (e): recovered function for the RLM with 16204 elements and the wavenumber computed to $4\pi$;
  (f): recovered function for the RLM with 183198 elements and the wavenumber computed to $4\pi$.}\label{PcolorComEx1}
\end{figure}

In addition, we show the accurate values of relative errors and element numbers in Table \ref{table1}.
In Figure \ref{PcolorComEx1}, the true scatterer function has been shown on the top left and five results obtained
by RLM and GMRLM with different parameters have been given.
From these, we can visually see the effectiveness of the proposed method.

\textbf{Example 2}:
For the second example, let
\begin{align}
q(x,y) := \left \{\begin{aligned}
& 0.7 \qquad \text{for } -0.3 \leq x \leq 0.3 \text{ and }-0.3 \leq y \leq 0.3 \\
& -0.1 \quad\! \text{for } -0.1 < x < 0.1 \text{ and }-0.1 < y < 0.1 \\
& 0 \qquad\,\,\,\,\, \text{other areas in square } -1\leq x\leq 1 \text{ and } -1\leq y\leq 1.
\end{aligned}\right.
\end{align}
As in Example 1, we need to develop some learning examples. Here, we assume that there is a square in $[-1,1]^{2}$,
but we did not know the position, size and height of the square.
We assume that the position, size and height are all uniform random variables with height between $[-1,1]$ and
the square supported in $[-1,1]^{2}$. As in Example 1, we generate 200 learning examples.
To give the reader an intuitive idea, we show the true scatterer and five typical learning examples in Figure \ref{TrueLearnFig2}.

\begin{figure}[htbp]
  \centering
  % Requires \usepackage{graphicx}
  \includegraphics[width=1\textwidth]{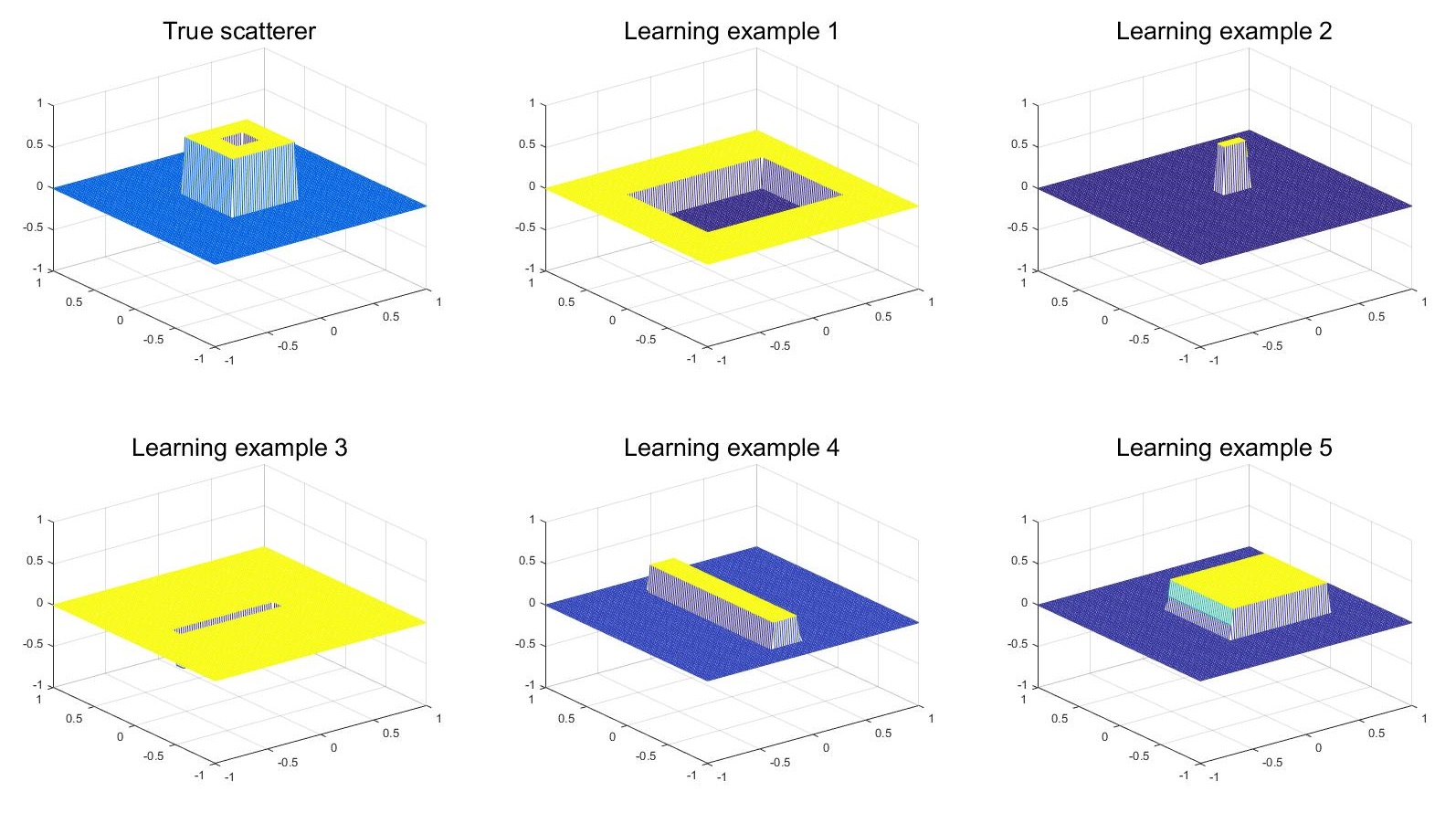}\\
  \caption{True scatterer and five typical learning examples}\label{TrueLearnFig2}
\end{figure}

\begin{figure}[htbp]
  \centering
  % Requires \usepackage{graphicx}
  \includegraphics[width=0.5\textwidth]{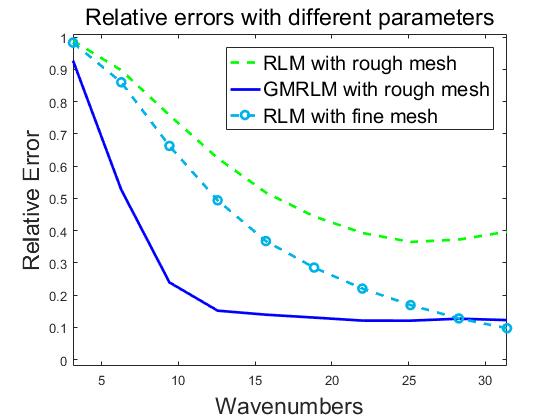}\\
  \caption{Relative errors with different parameters:
  green dotted line are relative errors obtained by using the RLM with 16204 elements;
  cyan dotted line with circles are relative errors obtained by using the RLM with 183198 elements;
  blue solid line are relative errors obtained by using the GMRLM with 16204 elements.}\label{RelaErrEx2}
\end{figure}

For this discontinuous scatterer, we take same values of parameters as in Example 1.
Beyond our expectation, the proposed algorithm obviously converges even faster than the RLM with more than eleven times of elements,
which is shown in Figure \ref{RelaErrEx2}.
By our understanding, the reason for such fast convergence is that the means and covariances learned by complex EM algorithm
not only compensate numerical errors but also encode some prior information of the true scatterer by learning examples.
Until the wavenumber is $9\pi \approx 28.26$, the RLM with 183198 elements provide a recovered function which has similar relative error
as the recovered function obtained by the GMRLM.
The RLM with only 16204 elements diverges as in Example 1 when wavenumber is too large, and the proposed algorithm
still can not compensate the loss of physics as shown in Figure \ref{RelaErrEx2}.
For accurate value of relative errors and elements, we show them in Table \ref{table2}.

\begin{table}[htbp]\label{table2}
\begin{center}
\caption{Comparison of the RLM and the GMRLM with different parameters}
\begin{tabular}{c|c|c|c}
  \Xhline{1pt}
  % after \\: \hline or \cline{col1-col2} \cline{col3-col4} ...
  Algorithm  & Element Number & Wavenumber & Relative Error  \\
  \hline
  RLM        & 16204          & $8\pi$     &  36.49\%    \\
  \hline
  RLM        & 183198         & $10\pi$    &  9.71\%  \\
  \hline
  GMRLM      & 16204          & $5\pi$     &  13.92\%    \\
  \hline
  GMRLM      & 16204          & $7\pi$     &  12.09\%   \\
  \hline
  RLM        & 16204          & $7\pi$     &  39.28\%   \\
  \hline
  RLM        & 183198         & $7\pi$     &  22.03\%    \\
  \hline
  RLM        & 183198         & $9\pi$     &  12.76\%   \\
  \Xhline{1pt}
\end{tabular}
\end{center}
\end{table}

\begin{figure}[htbp]
  \centering
  % Requires \usepackage{graphicx}
  \includegraphics[width=1\textwidth]{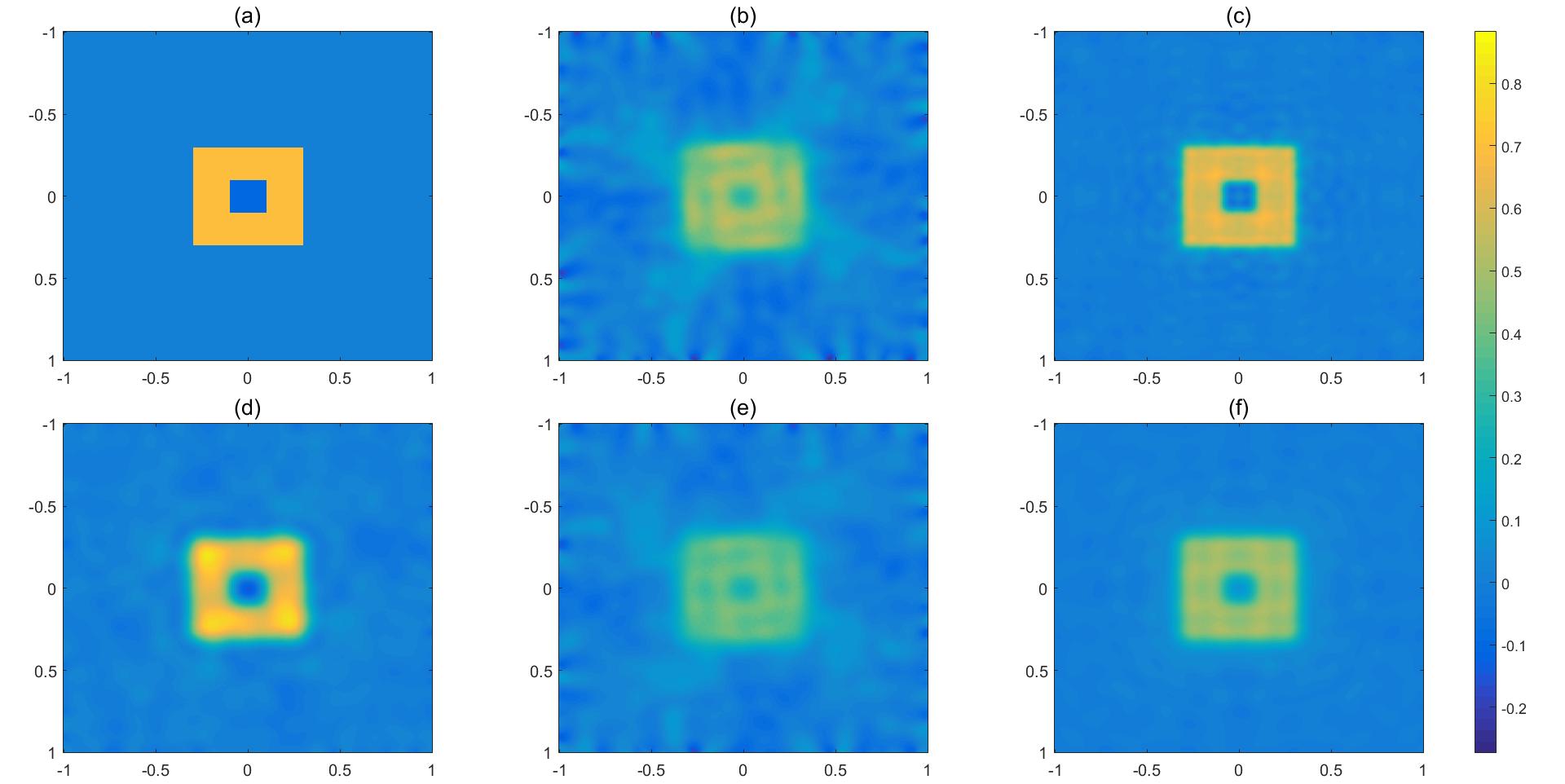}\\
  \caption{Recovered functions with different parameters. (a): true function;
  (b): minimum relative error estimate for the RLM with 16204 elements and the wavenumber computed to $8\pi$);
  (c): minimum relative error estimate for the RLM with 183198 elements and the wavenumber computed to $10\pi$;
  (d): minimum relative error estimate for the GMRLM with 16204 elements and the wavenumber computed to $8\pi$;
  (e): recovered function for the RLM with 16204 elements and the wavenumber computed to $8\pi$;
  (f): recovered function for the RLM with 183198 elements and the wavenumber computed to $8\pi$.}\label{PcolorComEx2}
\end{figure}

Finally, we provide the image of true scatterer on the top left in Figure \ref{PcolorComEx2}.
On the top middle, the best result obtained by the RLM with 16204 elements is given.
From this image, we can see that it is failed to recover the small square embedded in the large square.
The best result obtained by the RLM with 183198 elements is shown on the top right. It is much much better
than the function obtained by algorithm with 16204 elements.
At the bottom of Figure \ref{PcolorComEx2}, we show the best result obtained by the GMRLM with 16204 elements on the left
and show the results obtained by the RLM (compute to the same wavenumber as the GMRLM)
with 16204 elements and 183198 elements in the middle and on the righthand side respectively.
The recovered function by the GMRLM is not as well as the recovered function obtained by the RLM
with more than eleven times of elements and higher wavenumber.
However, beyond our expectation, it is already capture the small square embedded in
the large square, which is not incorporated in our 200 learning examples.

In summary, the proposed GMRLM converges much faster than the classical RLM and it can provide
a much better result at the same discrete level compared with the RLM.

%%%% ------------------------------------------------------------------------------------------------------------------------------

\section{Conclusions}

In this paper, we assume the modeling errors brought by rough discretization to be Gaussian mixture random variables.
Based on this assumption, we construct the general Bayesian inverse framework and prove the relations between MAP estimates and regularization methods.
Then, the general theory has been applied to a specific inverse medium scattering problem. Well-posedness in the statistical sense
has been proved and the related optimization problem has been obtained.
In order to acquire estimates of parameters in the Gaussian mixture distribution, we generalize the EM algorithm with real variables
to the complex variables case rigorously, which incorporate the machine learning process into the classical inverse medium problem.
Finally, the adjoint problem has been deduced and the RLM has been generalized to GMRLM based on the previous illustrations.
Two numerical examples are given, which demonstrate the effectiveness of the proposed methods.

This work is just a beginning, and there are a lot of problems need to be solved.
For example, we did not give a principle of choosing parameter $K$ appeared in the Gaussian mixture distribution.
In addition, in order to learn the model errors more accurately,
we can attempt to design new algorithms to adjust the parameters in the Gaussian mixture distribution efficiently in the inverse iterative procedure.

%%%%%%%%%%%%%%%%%%%%%%%%%%%%%%%%%%%%%%%%%%%%%%%%%%%%%%%%%%%%%%%%%%%%%%%%%%%%%%%%%%%%%%%%%%%%%%%%%%%%%%%%%%%%%%%%%%%%%%%%%%%%%%%%%%%%%%%%%%%%%

\section*{Acknowledgments}
%The authors would like to thank the anonymous
%referees for their comments and suggestions, which helped to improve the paper significantly.
This work was partially supported by the NSFC under grant
Nos. 11501439, 11771347, 91730306, 41390454
and partially supported by the Major projects of the NSFC under grant Nos. 41390450 and 41390454,
and partially supported by the postdoctoral science foundation project of China under grant no. 2017T100733.

\bibliographystyle{plain}
\bibliography{references}

\end{document}